\theoremstyle{plain}
\newtheorem{thm}{Theorem}[section]
\newtheorem{lem}[thm]{Lemma}
\crefname{lem}{Lemma}{Lemmas}
\newtheorem{cor}[thm]{Corollary}
\theoremstyle{definition}
\newtheorem{dfn}[thm]{Definition}
\theoremstyle{remark}
\newtheorem{rem}[thm]{Remark}
\newcommand{\R}{\mathbb{R}}
\newcommand{\Q}{\mathbb{Q}}
\newcommand{\Z}{\mathbb{Z}}
\newcommand{\cC}{\mathcal{C}}
\newcommand{\cvec}{\mathbf{Vec}}
\DeclareMathOperator{\Hom}{Hom}
\DeclareMathOperator{\Ker}{Ker}
\DeclareMathOperator{\Coker}{Coker}
\DeclareMathOperator{\Ima}{Im}
\DeclareMathOperator{\End}{End}
\newcommand{\sdb}{{\textbf{db}}}
\newcommand{\sbb}{\textbf{bb}}
\newcommand{\svb}{{\textbf{vb}}}
\newcommand{\shb}{\textbf{hb}}
\newcommand{\FS}{\mathcal S^\uparrow}
\newcommand{\FI}[1]{\mathcal S(#1)}
\newcommand{\HH}{{\rm H}}
\begin{document}
\title{Decomposition of persistence modules}

\author{Magnus Bakke Botnan}
\address{Department of Mathematics, VU University Amsterdam, The Netherlands}
\email{m.b.botnan@vu.nl}

\author{William Crawley-Boevey}
\address{Fakult\"at f\"ur Mathematik, Universit\"at Bielefeld, 33501 Bielefeld, Germany}
\email{wcrawley@math.uni-bielefeld.de}

%\subjclass[2010]{...}

\keywords{Persistence module}

\thanks{The first author has been supported by the DFG Collaborative Research Center SFB/TR 109 “Discretization in Geometry and Dynamics”. The second author has been supported by the Alexander von Humboldt Foundation in the framework of an Alexander von Humboldt Professorship endowed by the German Federal Ministry of Education and Research.}

%\begin{abstract}
%\end{abstract}
\begin{abstract}
We show that a pointwise finite-dimensional persistence module indexed over a small category decomposes into a direct sum of indecomposables with local endomorphism rings. As an application of this result we give new, short proofs of fundamental structure theorems for persistence modules. 
\end{abstract}
\maketitle

\section{Introduction}
Let $\cC$ be a small category and write $\cvec$ for the category of vector spaces over a field $k$.
By a \emph{persistence module} (over $\cC$) we mean a functor $M\colon \cC\to\cvec$. We say that $M$ is \emph{pointwise finite-dimensional} if each $M_x$ is finite-dimensional.

The work in this paper is inspired by topological data analysis (TDA). For an introduction to TDA, see e.g. the survey by Carlsson \cite{carlsson2009topology}, or the recent book 
by Oudot \cite{oudot2015persistence} on quiver representations and TDA.   

Let $X$ be a topological space, $h\colon X\to \R$ a continuous function, and consider the following functors 
\begin{align*}
\FS(h)\colon& \R\to {\rm Top} \quad & \FS(h)(t) = \{x\in X \mid h(x) \leq t\} \\
 \FI{h}\colon& \R^2\to {\rm Top} \quad & \FI{h}(-s,t) = \{x\in X \mid s< h(x) < t\}
\end{align*}
\emph{Persistent homology} studies the evolution of the homology of the sublevel sets of $h$ and is perhaps the most prominent tool in TDA. Specifically, the \emph{$p$-th sublevel set persistence module associated to $h$} is the functor $\HH_p\FS(h)\colon \R\to \cvec$. Here $\HH_p\colon {\rm Top} \to \cvec$ denotes the $p$-th singular homology functor with coefficients in $k$. Importantly, and as we shall see later in this paper, if $\HH_p\FS(h)$ is pointwise finite-dimensional, then it is completely determined by a collection of intervals called the \emph{barcode} of $\HH_p\FS(h)$. This collection of intervals is then in turn used to extract topological information from the data at hand; a ''long'' interval corresponds to a topological feature which persists over a significant range. A richer invariant is obtained by considering interlevel sets: define the \emph{$p$-th interlevel set persistence of $h$} to be the functor $\HH_p \FI{h}\colon \R^2\to \cvec$. By a Mayer-Vietoris argument \cite{cochoy2016decomposition} one can show that $\HH_p\FI{h}$ is middle exact (see \cref{sec:upper}) when restricted to the points above the anti-diagonal. Analogously to above,  assuming that $\HH_p\FI{h}$ is pointwise finite-dimensional, such a module is completely determined by a collection of simple regions in $\R^2$. These regions in turn give valuable insight into the homological properties of the fibers of the function $h$.  We refer the reader to \cite{botnan2016algebraic,cochoy2016decomposition} for an in-depth treatment. 

We also remark that there are many settings for which it is fruitful to combine a collection of real-valued functions into a single function $g\colon X\to \R^n$ \cite{carlsson2009theory}. By combining them into a single function we not only learn how the data looks from the point of view of each function (i.e. a type of measurement) but how the different functions (measurements) interact. How to effectively use such persistence modules in data analysis is not clear and for the time being an area of active research, see e.g. \cite{lesnick2015interactive} and the references therein.

\subsection{Contributions}
We give a short direct proof of the following result. 
\begin{thm}
\label{t:decomp}
Any pointwise finite-dimensional persistence module is a direct sum of indecomposable modules with local endomorphism ring.
\end{thm}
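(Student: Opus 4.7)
My plan is to split the theorem into two independent ingredients and combine them via a Zorn's lemma argument. The ingredients are: (a) every indecomposable pointwise finite-dimensional persistence module has local endomorphism ring; and (b) every nonzero pointwise finite-dimensional persistence module admits at least one indecomposable direct summand. Granted (a) and (b), Zorn's lemma applied to families of pairwise independent indecomposable direct summands of $M$ produces a maximal family; that family must exhaust $M$, for otherwise (b) applied to the complementary summand would contradict maximality, and (a) guarantees that the resulting decomposition has the required form.

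For (a) I would use a functorial version of Fitting's lemma. Let $N$ be indecomposable and pointwise finite-dimensional, and let $f\in\End(N)$. Since each $N_x$ is finite-dimensional, the ascending chain of kernels and the descending chain of images of $f^n|_x$ stabilize, yielding a Fitting splitting $N_x=K_x\oplus I_x$ where $K_x=\{v\in N_x:f^nv=0\text{ for some }n\}$ and $I_x=\bigcap_n\Ima(f^n|_x)$. Both characterizations are preserved by the structure maps of $N$, so $K$ and $I$ are subfunctors of $N$ and $N=K\oplus I$. Indecomposability forces $K=0$ or $I=0$. In the first case $f$ is pointwise an isomorphism, hence an automorphism of $N$; in the second $f$ is pointwise nilpotent, so the sum $\sum_{n\geq 0}f^n$, which is pointwise finite because $f|_x$ is nilpotent on the finite-dimensional space $N_x$, defines a natural endomorphism inverting $1-f$. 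In either case the complement of the units in $\End(N)$ is closed under addition, so $\End(N)$ is local.

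The substance of the theorem is therefore (b), and this is where I expect the main obstacle. The natural strategy is to analyze the endomorphism ring $\End(M)$, which embeds into $\prod_x\End(M_x)$ and thereby inherits a profinite-type topology whose Jacobson-like ideal should consist of the locally nilpotent endomorphisms. I would choose an object $x_0$ with $M_{x_0}\neq 0$ and a primitive idempotent $\bar e$ of the finite-dimensional semisimple quotient of $\End(M_{x_0})$, then try to lift $\bar e$ to an idempotent $e\in\End(M)$ by a Hensel-type argument compatible with this topology. The summand $eM$ should then be indecomposable, since any further splitting would refine $\bar e$ at $x_0$ and contradict its primitivity.

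The real difficulty lies in performing this lifting for an arbitrary small category $\cC$. In the $\R$-indexed case treated previously by the second author, the linear order furnishes canonical interval summands and allows one to pin down the needed idempotent by a direct geometric construction; for general $\cC$ no such order exists, and one must coax the idempotent from the ring-theoretic structure of $\End(M)$ alone, plausibly via a transfinite induction on the dimensions $\dim M_{x_0}$ for a judicious choice of $x_0$. Making this step rigorous while maintaining naturality across all morphisms of $\cC$ is, in my view, the crux of the proof.
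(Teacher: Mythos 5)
Your ingredient (a), the Fitting's-lemma argument showing that an indecomposable pointwise finite-dimensional module has local endomorphism ring, is correct and is essentially the paper's argument.

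The Zorn's-lemma framing has a genuine gap, however. You propose to take a maximal family of pairwise independent indecomposable direct summands and argue that it must exhaust $M$, ``for otherwise (b) applied to the complementary summand would contradict maximality.'' This assumes that the internal direct sum $\bigoplus_i N_i$ of your maximal family is itself a direct summand of $M$, so that a complement exists to feed into (b). But that is precisely what fails in general: the union of a chain of such families is again an independent family of submodules, yet there is no reason its direct sum should remain a summand of $M$ --- an infinite internal direct sum of direct summands need not be complemented. So the Zorn poset you set up does not obviously have upper bounds of the kind your argument requires, and the maximality contradiction does not go through. This is the classical pitfall in naive proofs of infinite Krull--Remak--Schmidt statements.

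The paper sidesteps both this problem and your ingredient (b). It applies Zorn's lemma to a different poset: the set $D$ of \emph{all} decompositions of $M$ into direct sums of nonzero submodules, ordered by refinement. The work is then to show that a chain $T$ in $D$ has an upper bound, and the paper constructs it explicitly by forming, for each element $\lambda$ of the inverse limit $\varprojlim_{I\in T} I$, the submodule $M[\lambda] = \bigcap_{I\in T}\lambda_I$, and checking that $M = \bigoplus_\lambda M[\lambda]$ using only pointwise finite-dimensionality (the dimension of $M_x$ bounds the number of nonzero terms any element can split into). A maximal element of $D$ is then automatically a decomposition into indecomposables, with no separate ``existence of an indecomposable summand'' lemma needed. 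Your proposed proof of (b) by lifting a primitive idempotent from some $\End(M_{x_0})$ through a profinite-type topology on $\End(M)$ is, as you acknowledge, only a sketch and the hard naturality step is not carried out; but more to the point, even a complete proof of (b) would not repair the Zorn argument above without an additional exchange-type property. I would recommend abandoning the (a)+(b) split and instead working directly with the poset of decompositions.
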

We remark that this result is already known by the theory of locally finitely presented additive categories.
The category $\cvec$ is locally finitely presented, hence so is the category of persistence modules,
which is a functor category.  Now any pointwise finite-dimensional module is a direct sum of
indecomposables with local endomorphism ring by the theory of $\Sigma$-pure-injectives,
see (3)$\Rightarrow$(4) of \cite[\S3.2 Theorem 2]{CBlfp}.

Persistence modules are often considered for partially ordered sets (where $\cC$ is the naturally associated category).
Using this result, we give a short proof of the following result, originally proved in a slightly weaker form in \cite{CBdpf}.

\begin{thm}
\label{t:totorder}
Pointwise finite-dimensional persistence modules over a totally ordered set decompose into interval modules.
\end{thm}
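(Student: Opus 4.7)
The plan is to use Theorem~\ref{t:decomp} to reduce to the indecomposable case and then identify each indecomposable with local endomorphism ring as an interval module. By Theorem~\ref{t:decomp}, any pointwise finite-dimensional persistence module over $(T,\leq)$ decomposes into a direct sum of indecomposables with local endomorphism rings, so it suffices to treat a single such indecomposable $M$. Write $S = \{x \in T : M_x \neq 0\}$ for its support.

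I would first show that $S$ is convex, hence an interval in $T$. If not, there exist $x, y \in S$ and $z \in T$ with $x \leq z \leq y$ and $M_z = 0$; then for every $a \leq z \leq b$ the factorization $M_{ab} = M_{zb} \circ M_{az}$ forces $M_{ab} = 0$, so $M$ splits as a direct sum of the subfunctor supported on $\{t \leq z\}$ and the subfunctor supported on $\{t > z\}$. Both summands are nonzero (they contain $M_x$ and $M_y$ respectively), contradicting indecomposability.

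The core step is to show that $M_{xy}$ is an isomorphism for every $x \leq y$ in $S$. Since each $M_y$ is finite-dimensional, the descending family of subspaces $\Ima(M_{xy}) \subseteq M_y$ (indexed by $x \leq y$ in $S$) stabilizes to a subspace $U_y$, and the ascending family $\ker(M_{yz}) \subseteq M_y$ (indexed by $z \geq y$ in $S$) stabilizes to a subspace $N_y$; totality of the order and the functor axioms make $N = (N_y)_{y \in S}$ a subfunctor of $M$. From any failure of some $M_{xy}$ to be bijective (i.e.\ $N \neq 0$ or some $U_y \neq M_y$) the plan is to construct, via pointwise Fitting decompositions patched using the totally ordered structure, a nontrivial idempotent in $\End(M)$, contradicting locality. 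This is the main obstacle: merely having a proper subfunctor does not produce a direct summand, so the argument must genuinely use both pointwise finite-dimensionality (to localize Fitting at each $M_y$) and the linearity of the index (to globalize the pointwise decomposition to a decomposition of $M$).

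Once this is in hand, fixing $x_0 \in S$ and transporting a basis of $M_{x_0}$ along the isomorphisms $M_{x_0 y}$ (for $y \geq x_0$ in $S$) and $M_{y x_0}^{-1}$ (for $y \leq x_0$ in $S$) identifies $M$ with $\dim(M_{x_0})$ copies of the interval module $k_S$. Indecomposability then forces $\dim M_{x_0} = 1$, so $M \cong k_S$, and the theorem follows.
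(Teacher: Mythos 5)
Your reduction to the indecomposable case via \cref{t:decomp} and the convexity-of-support argument are both fine, but the proof has a genuine gap exactly where you flag ``the main obstacle'': you never actually prove that $M_{\iota_{yx}}$ is an isomorphism for all $x\le y$ in the support. The sketch you give --- ``construct, via pointwise Fitting decompositions patched using the totally ordered structure, a nontrivial idempotent'' --- is not a proof, and it is not clear how it could be turned into one. Fitting's lemma produces an idempotent from a single endomorphism of $M$; here you have a submodule $N$ (the eventual kernel) and a quotient-type object $U$, but no endomorphism of $M$ whose Fitting decomposition is being taken. Producing a direct complement of $N$ (equivalently, a retraction $M\to N$) is precisely what needs a new idea, and you say so yourself. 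Moreover, aiming at ``isomorphism'' is more than is needed and likely harder to hit directly.

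The paper takes a different and cleaner route that you may want to compare with. It first notes that over a directed ideal $I$, the constant module $k_I$ is \emph{injective} (\cref{l:injective}). If the support $S$ has a minimal element $s$, any nonzero $m\in M_s$ generates a copy of $k_J$ for a directed ideal $J\subseteq S$; injectivity makes this a direct summand, so indecomposability forces $M\cong k_J=k_S$. For general $S$, restricting $M$ to $\{t:t\ge x\}$ and decomposing (using \cref{t:decomp} again) shows every summand is an interval module with minimum $x$, hence $M_x\to M_y$ is \emph{surjective} for all $y>x$. Surjectivity of all internal maps plus the dual lemma on (co)directed posets (\cref{l:directecodirected}) then produces a $k_S$ summand, and indecomposability finishes the proof. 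The operative ingredients you are missing are the injectivity of constant modules over directed ideals and the directed/codirected lemma; the operative simplification you are missing is that surjectivity (or, dually, injectivity) of the internal maps suffices --- one does not need to establish bijectivity from scratch.
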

Note that the advantage of the approach in \cite{CBdpf} is that it produces functors 
which give the multiplicity of any interval module as a direct summand. 

Following the ideas of \cite{CBdpf}, \cref{t:totorder} was generalized to exact (middle exact in this paper) bi-modules in \cite{cochoy2016decomposition}. We give a comparatively short proof of a slight generalization of the main theorem of \cite{cochoy2016decomposition}.

\begin{thm}
Pointwise finite-dimensional middle exact modules over a product of two totally ordered sets decompose into block modules.
\label{thm:block}
\end{thm}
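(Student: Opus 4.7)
The plan is to reduce to classifying indecomposables via \cref{t:decomp} and then to show each indecomposable middle exact pointwise finite-dimensional module over $T_1 \times T_2$ is a block module. By \cref{t:decomp}, any such module $M$ splits as $M = \bigoplus_{i \in I} M_i$ with each $M_i$ indecomposable and having local endomorphism ring. Since middle exactness is a pointwise condition (an exact-in-the-middle sequence attached to each rectangle in $T_1 \times T_2$), it is inherited by arbitrary direct summands. Thus it suffices to prove: every indecomposable pointwise finite-dimensional middle exact persistence module $N$ over $T_1 \times T_2$ is isomorphic to a block module.

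Fix such an $N$. I would first analyse its support $S := \{x \in T_1 \times T_2 : N_x \neq 0\}$. Middle exactness applied to each rectangle in $T_1 \times T_2$ yields exactness in the middle of
\[
N_{(a_1, a_2)} \to N_{(b_1, a_2)} \oplus N_{(a_1, b_2)} \to N_{(b_1, b_2)}.
\]
From this I would derive convexity of $S$ (if $x \leq y \leq z$ and $x, z \in S$ then $y \in S$), and then classify its possible shapes by examining the projections $\pi_1(S) \subseteq T_1$ and $\pi_2(S) \subseteq T_2$: these are intervals by convexity, so each is determined by a pair of Dedekind cuts. The block classification reduces to showing, for an indecomposable $N$, that $S$ must have one of the four standard block shapes (\textbf{bb}, \textbf{db}, \textbf{vb}, \textbf{hb}). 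This is done by ruling out hybrid shapes (e.g.\ L-shapes): in such a shape, middle exactness would place two summands of $N$ on different ``arms'' of $S$, yielding a non-trivial idempotent in $\End(N)$ and contradicting locality.

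Finally, I would show that $N$ is isomorphic to the block module supported on $S$. Middle exactness forces each restriction map $N_x \to N_y$ between points of $S$ with $x \leq y$ to be nonzero; a propagation argument starting from any nonzero vector at a single point of $S$ then produces a submodule of $N$ isomorphic to the block module on $S$, and indecomposability forces equality. Dimensions along $S$ must be $1$ with identity transitions for the same reason.

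The main obstacle is the second step: proving, from middle exactness and locality of $\End(N)$ alone, that the support of an indecomposable $N$ is a block. Without metric or countability on $T_1, T_2$, this has to be carried out purely order-theoretically (in terms of up-sets and down-sets in $T_1$ and $T_2$), and the idempotent-construction argument that rules out non-block convex supports must be made precise in that generality.
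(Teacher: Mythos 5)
Your opening reduction is correct and matches the paper: by \cref{t:decomp}, $M$ decomposes into indecomposables, middle exactness passes to summands, so it suffices to show every indecomposable pointwise finite-dimensional middle exact $N$ is a block module. From there, however, the proposal is a plan rather than a proof, and it contains genuine gaps that you yourself flag but do not close.

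First, the claim that middle exactness forces the support of $N$ to be convex is unproved and is not an immediate consequence of the middle exactness condition: for a rectangle $a\le d$ with $N_a\neq 0\neq N_d$ and $N_b=N_c=0$, exactness at the middle term $N_b\oplus N_c=0$ is vacuous and gives no contradiction. Any proof of convexity would have to invoke indecomposability in a serious way, and none is offered. Second, even granting convexity, the step that ``rules out hybrid shapes by producing a non-trivial idempotent'' is the entire content of the theorem, and no construction of such an idempotent is given; there is no a priori reason the module restricted to one ``arm'' of a non-block support is a summand of $N$. Third, the final assertion that ``middle exactness forces each restriction map $N_x\to N_y$ between support points to be nonzero'' and that ``dimensions along $S$ must be $1$ with identity transitions'' is not justified; showing this is in fact the hardest part of the argument, and it is not true that the restriction maps of an arbitrary middle exact module between support points are nonzero---this only emerges once one already knows $N\cong k_I$.

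The paper's actual route avoids any direct analysis of the support. It is a case split on the failure modes of the sequence \cref{eq:middlex}: if some rectangle map has a kernel, a zigzag/interval argument (\cref{thm:zz}) combined with a lifting lemma (\cref{lem:lift}) builds a monomorphism $k_I\hookrightarrow M$ with $I$ a directed ideal, whence $k_I$ is injective (\cref{l:injective}) and the monomorphism splits (\cref{lem:splitinj}); the cokernel case is handled by duality (\cref{lem:splitproj}). When every rectangle sequence is short exact, the paper introduces the submodules $\Ima M^\leftarrow$, $\Ima M^\downarrow$, $\ker M^\rightarrow$, $\ker M^\uparrow$, proves structural identities among them (\cref{lem:shortexact,lem:kerker}), and concludes. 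Both the injectivity mechanism and the construction of these auxiliary submodules are essential and have no counterpart in your outline. As written, the proposal identifies the right starting point but does not contain a proof of the key step, and the route it suggests for that step (support classification plus idempotent construction) is not developed to the point where one could assess whether it can be made to work.
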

As a corollary to this we obtain a structure theorem for pointwise finite-dimensional persistence modules on \emph{zigzag paths}. This generalizes the structure theorem for \emph{zigzag persistent homology} given in \cite{botnanzigzag}. We refer the reader to \cite{botnanzigzag} and the references therein for a discussion on zigzag persistent homology. In the last part of the paper we apply the structure theorem for persistence modules indexed by zigzag paths to prove a structure theorem for persistence modules that are middle exact (strictly) above the anti-diagonal in $\R^2$. 
\begin{thm}
Pointwise finite-dimensional middle exact modules over the (strictly) upper-triangular subset of the plane decompose into block modules.\label{thm:upperT}
\end{thm}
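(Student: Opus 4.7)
The plan is to combine \cref{t:decomp} with the structure theorem for zigzag persistence modules (the corollary of \cref{thm:block} mentioned in the excerpt). By \cref{t:decomp}, any pointwise finite-dimensional module $M$ over the strictly upper-triangular subset $U \subset \R^2$ decomposes as a direct sum of indecomposables with local endomorphism rings. Since middle exactness passes to summands, the remaining task is to identify each such middle exact indecomposable summand $N$ with a block module.

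First I would study the support $S(N) = \{p \in U : N_p \neq 0\}$ by restricting $N$ to zigzag subposets of $U$. Concretely, for any finite collection of points in $U$ one can build a zigzag path in $U$ whose comparable pairs interpolate them (using that $U$ is directed in each of the up, down, left, right directions locally). Middle exactness is preserved by restriction to the squares appearing in such a zigzag, so the zigzag structure theorem applies and $N$ restricted to any zigzag decomposes into interval summands. Since $N$ is globally indecomposable, the uniqueness part of \cref{t:decomp} strongly constrains how these interval summands on overlapping zigzags can fit together.

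Next I would classify $S(N)$ as belonging to one of the four canonical block shapes in $U$: a birth quadrant, a death quadrant, a horizontal band, or a vertical band (each intersected with $U$). The argument is by ruling out any other shape: a support with a "forbidden" corner would produce, in a suitably chosen zigzag through that corner, an interval summand incompatible with $N$ being indecomposable over all of $U$. Once $S(N)$ is pinned down to a block shape, middle exactness together with pointwise finite-dimensionality forces $\dim N_p \le 1$ on $S(N)$ and forces every structure map between nonzero stalks to be an isomorphism, identifying $N$ with the corresponding block module.

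The main obstacle will be the gluing step: the local zigzag decompositions must organize into coherent global information on $S(N)$, including in the presence of unbounded supports extending towards the diagonal or to infinity. This is where the uniqueness of the decomposition in \cref{t:decomp} becomes indispensable: any two compatible zigzag decompositions must agree on their overlap, allowing one to patch together a block description of $N$ from its restrictions to a cofinal family of finite zigzag subposets of $U$.
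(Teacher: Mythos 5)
Your proposal has the right opening move (reduce to indecomposables via \cref{t:decomp}) and correctly identifies zigzag restrictions as a useful tool, but it misses the central mechanism of the paper's argument and, as a result, leaves a genuine gap at exactly the step you flag as the main obstacle.

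The paper does not try to classify the support $S(N)$ by patching together interval decompositions from many zigzags. Instead it first isolates and disposes of the cases where some square fails to be short exact (Lemmas \ref{lem:splitinjT} and \ref{lem:splitprojT}, which lift the full-plane Lemmas \ref{lem:splitinj} and \ref{lem:splitproj} to $T$). Once all squares in \cref{eq:middlex} are short exact, the key observation is that $M$ is completely \emph{determined} by its restriction to a single zigzag path $Z(\gamma)\subset T$: short exactness gives $M_d\cong\Coker(M_a\to M_b\oplus M_c)$ and $M_a\cong\Ker(M_b\oplus M_c\to M_d)$, which is exactly the recursion defining the extension functor $E_\gamma$, so $M\cong E_\gamma(M|_{Z(\gamma)})|_T$. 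Since $E_\gamma$ commutes with direct sums and $M|_{Z(\gamma)}$ is a direct sum of interval modules (\cref{thm:zigzag}), $M$ is automatically a direct sum of the modules $E_\gamma(k_I)|_T$, each of which is visibly a block module. Indecomposability then pins $M$ to a single one. There is no gluing problem to solve.

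Your approach, by contrast, has three unresolved difficulties. First, the ``uniqueness part of \cref{t:decomp}'' does not on its own organize restrictions of $N$ to overlapping zigzags into coherent global data: an interval summand of $N|_{Z(\gamma)}$ need not arise as the restriction of a direct summand of $N$, so you have no canonical correspondence between interval summands on different zigzags, and the patching step is not merely an obstacle but currently has no mechanism behind it. Second, the assertion that once $S(N)$ is shown to be a block, middle exactness and pointwise finiteness ``force $\dim N_p\le 1$ and force every structure map to be an isomorphism'' is essentially the full statement of the theorem restricted to that support; you offer no argument for it, and without the extension-functor reconstruction it is not clear how to prove it. Third, you never separate out the case where some square is not short exact; this separation is essential, because the reconstruction-from-a-zigzag idea only works under short exactness, and the non-short-exact indecomposables (birth and death quadrant modules) have to be produced by a different argument. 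Finally, the closed upper-triangular case requires an extra extension argument over the anti-diagonal which your proposal does not touch.
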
 

\begin{rem}
We are indebted to D. Vossieck for pointing out the reference \cite[\S 3.6]{gabriel1992representations},
where Theorems \ref{t:decomp} and \ref{t:totorder} are both discussed, with sketch proofs. As
this paper is intended to be self-contained and aimed at a broader
audience, we include detailed proofs of both theorems. In fact, our proof of \cref{thm:block} depends in part on \cref{t:totorder}, and, as the reader will see, very little work is needed to prove \cref{t:totorder} once the machinery for proving (the complementary parts of) \cref{thm:block} has been introduced. 
\end{rem}

\section{Preliminaries}
Let $\cC$ be a small category and $M,N\colon \cC\to \cvec$. If $x$ is an object in $\cC$ we write $M_x$ for the corresponding vector space, and if $\alpha\colon x\to y$ is
a morphism, we write $M_\alpha\colon M_x\to M_y$. A morphism $f\colon M\to N$ is an \emph{epimorphism} (\emph{monomorphism}) if $f_x\colon M_x\to N_x$ is surjective (injective) for all $x\in {\rm Ob}(\cC)$. A morphism is an \emph{isomorphism} if it is both an epimorphism and a monomorphism. A monomorphism $f\colon M\to N$ \emph{splits}, or is a \emph{split monomorphism}, if there exists a $g\colon N\to M$ such that $g\circ f = {\rm id}_M$. We say that $M$ and $N$ are \emph{isomorphic} if there exists an isomorphism $f\colon M\to N$ and denote this by $M\cong N$. The \emph{direct sum} of $M$ and $N$ is the persistence module $M\oplus N\colon \cC\to \cvec$ given by $(M\oplus N)_x = M_x\oplus N_x$ and $(M\oplus N)_\alpha = M_\alpha\oplus N_\alpha$ for all $\alpha\colon x\to y$. The persistence module $M'$ is a \emph{submodule} of $M$ if $M'_x \subseteq M_x$ and $M'_\alpha$ is the restriction of $M_\alpha$ to $M'_x$ for all $\alpha\colon x\to y$. We write $M'\subseteq M$ if $M'$ is a submodule of $M$. If $M$ has two non-trivial submodules $M'$ and $M''$ such that $M=M'\oplus M'$, then $M$ is \emph{decomposable} and $M'$ and $M''$ are \emph{summands} of $M$. If no such decomposition exists, then $M$ is \emph{indecomposable}. It is an elementary fact that $M'\subseteq M$ is a summand of $M$ if and only the inclusion $M'\hookrightarrow M$ splits. If every monomorphism with domain $M$ splits, then $M$ is an \emph{injective persistence module}. 

The endomorphism ring $\End(M):= \Hom(M,M)$ is \emph{local} if $\theta$ or $1-\theta$ is invertible for all $\theta\in \End(M)$. The Krull--Remak--Schmidt--Azumaya theorem\cite{azumaya} asserts that persistence modules which decompose into a direct sum of indecomposables with a local endomorphism ring, do so in an essentially unique way (unique up to reordering and isomorphism). If $M$ has a non-trivial decomposition then $\End(M)$ is not local. 

Dualizing each vector space and each linear map in a persistence module $M\colon \cC \to \cvec$ yields a persistence module $DM\colon \cC^{\rm op}\to \cvec$. Here $\cC^{\rm op}$ denotes the opposite category of $\cC$. This dualization procedure is contravariantly functorial, exact and satisfies $D^2M\cong M$ whenever $M$ is pointwise finite-dimensional. 
\subsection{Posets} Let $P$ be a partially ordered set (poset). Recall that $P$ can be considered as a category with objects the elements of $P$ in a natural way:
\[
\Hom(p,q) = \begin{cases}
\{ \iota_{qp} \} & (p\le q) \\
\varnothing & (p \not\le q)
\end{cases}
\]
If $Q\subseteq P$ and $M\colon P\to \cvec$, then $M|_Q$ denotes the restriction of $M$ to $Q$.  A subset $I\subseteq P$ is \emph{convex} if $p\leq q \leq r$ with $p,r\in P$ implies that $q\in P$. If $I$ satisfies the stronger condition that $q\in I$ whenever $q\leq p$ and $p\in I$, then we say that $I$ is an \emph{ideal}. Dually, if $I$ satisfies that $q\in I$ whenever $q\geq p$ and $p\in I$, then we say that $I$ is a \emph{filter}. Furthermore, $I$ is \emph{connected} if there for every $p,q\in P$ exists a sequence $\{r_i\}_{i=0}^u\subseteq I$  such that $r_0 =p$, $r_u = q$ and $r_i\leq r_{i+1}$ or $r_i\geq r_{i+1}$ for all $0\leq i<u$. We define an \emph{interval} to be a non-empty, connected, and convex set. Examples of intervals include $[p,q]$, $(p,q)$, $(p,q]$ and $(p,q)$, where $[p,q] = \{r\in P \mid p\leq r \leq q\}$, and similarly for the other cases. We also have intervals $[p,\infty)= \{r\in P \mid r\geq p\}$ and $(p, \infty) = \{r\in P\mid r>p\}$, and similarly for $(-\infty, p)$ and $(-\infty, p]$. The notation $\langle p,q\rangle$ is used to denote any of the appropriate intervals in $\{(p,q), [p,q), (p,q], [p,q]\}$. E.g, we have $\langle p ,\infty\rangle \in \{ (p,\infty), [p,\infty)\}$. When $P$ is totally ordered the intervals are precisely the non-empty convex sets, and if $P = \R$, they are all of the form $\langle p,q\rangle$. Observe that the subset $\{x \mid x^2<2\}\subseteq \Q$ is an interval which is not of the form $\langle p,q\rangle$. 

For an interval $I\subseteq P$, we write $k_I$ for the \emph{constant module} which is 1-dimensional at points on $I$, zero at points outside $I$, and with the the morphisms $\iota_{yx}$ for $x,y\in I$ sent to the identity map. It follows from $\End(k_I)\cong k$ that $k_I$ is indecomposable \cite[Proposition 2.2]{botnan2016algebraic}.

A subset $I\subseteq P$ is \emph{directed} if there for every $p,q\in I$ exists a $c\in I$ satisfying $p,q\leq c$.  Dually, $I\subseteq P$ is \emph{codirected} if there for every $p,q\in I$ exists a $c\in I$ satisfying $p,q\geq c$. 
\begin{lem}
Let $I\subseteq P$ be a directed ideal. Then $k_I\colon P\to \cvec$ is an injective persistence module.
\label{l:injective}
\end{lem}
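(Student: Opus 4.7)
The plan is to verify injectivity by showing that any monomorphism $f\colon k_I\hookrightarrow N$ admits a retraction $g\colon N\to k_I$. For each $x\in I$, set $v_x:=f_x(1)\in N_x$; these are nonzero because $f_x$ is injective, and naturality of $f$ combined with $(k_I)_{\iota_{yx}}=\mathrm{id}_k$ for $x\le y$ in $I$ forces $N_{\iota_{yx}}(v_x)=v_y$.

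Since $I$ is directed, the restriction of $N$ to $I$ is a filtered diagram of vector spaces, and I form the colimit $V:=\varinjlim_{x\in I}N_x$ with cocone maps $\pi_x\colon N_x\to V$. The compatible family $(v_x)_{x\in I}$ descends to a single element $v\in V$ characterised by $\pi_x(v_x)=v$ for every $x\in I$. Crucially, $v\neq 0$: in the standard model of a filtered colimit of vector spaces, $\pi_x(v_x)=0$ would force $N_{\iota_{yx}}(v_x)=v_y$ to vanish for some $y\ge x$ in $I$, contradicting $v_y\neq 0$. Extending the functional $v\mapsto 1$ from $kv$ to all of $V$ yields a linear map $\phi\colon V\to k$ with $\phi(v)=1$.

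Define $g\colon N\to k_I$ pointwise by $g_x:=\phi\circ\pi_x$ for $x\in I$ and $g_x:=0$ for $x\notin I$. The retraction identity is immediate from $(g\circ f)_x(1)=\phi(\pi_x(v_x))=\phi(v)=1$ for $x\in I$. Naturality of $g$ reduces to a case analysis on $\iota_{yx}\colon x\to y$: when $x,y\in I$ it follows from the universal-property identity $\pi_y\circ N_{\iota_{yx}}=\pi_x$ together with $(k_I)_{\iota_{yx}}=\mathrm{id}$; when $x\in I$ and $y\notin I$ both sides of the naturality square vanish because $(k_I)_y=0$; the remaining case $x\notin I$ and $y\in I$ is impossible since $I$ is an ideal.

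The main obstacle is securing $v\neq 0$ in the colimit, and this is precisely where the directedness of $I$ is essential: it ensures that $V$ is a filtered colimit and thereby supplies the elementwise criterion for vanishing. The ideal hypothesis plays a lighter role, ruling out the problematic configuration $x\notin I$, $y\in I$, $x\le y$ in the naturality verification.
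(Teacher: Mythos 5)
Your proof is correct and follows essentially the same route as the paper's. The paper invokes exactness of filtered colimits to conclude that $\hat f\colon\varinjlim_{p\in I}(k_I)|_I\to\varinjlim_{p\in I}M|_I$ is injective and then takes a left inverse $\hat g$; you unpack this by directly verifying that the element $v=\pi_x(v_x)$ is nonzero in the filtered colimit and then extending the functional $v\mapsto 1$, which is precisely the construction of $\hat g$. The retraction and the case analysis for naturality are identical in substance.
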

\begin{proof}
This follows from the fact that $\varinjlim_{p\in I}$ is an exact functor whenever $I$ is directed. Assume that $f\colon k_I \hookrightarrow M$ is a monomorphism and consider its restriction 
to $I$, $f|_I\colon (k_I)|_I \hookrightarrow M|_I$. By the aforementioned exactness property \[\hat{f}:=\varinjlim_{p\in I} f|_I\colon \varinjlim_{p\in I} (k_I)|_I  \hookrightarrow \varinjlim_{p\in I} M|_I\] is an injection. Let $\hat{g}$ be a left inverse to $\hat{f}$ and for $p\in I$ define $g_p: M_p\to (k_I)_p$ as the composition 
\[M_p \to \varinjlim_{p\in I} M|_I \xrightarrow{\hat{g}} \varinjlim_{p\in I} (k_I)|_I \xrightarrow{\cong} (k_I)_p = k.\]
For $p\not\in I$ define $g_p = 0$. It is clear that $g\circ f = {\rm id}_{k_I}$. 
\end{proof}
We remark that the converse statement of the previous lemma is also true \cite[Proposition 1.1]{hoppner1983note}. 

\begin{lem}
\label{lem:codirected}
Suppose $P$ is codirected. Let M be a pointwise finite-dimensional persistence module over $P$ with $M_p \neq 0$ for all $p\in P$, and suppose that $M_p \to M_q$ is injective for all $p \leq q$. Then there is a monomorphism $k_P\hookrightarrow M$. In particular, if P is also directed, then M has a copy of $k_P$ as a direct summand. 
\end{lem}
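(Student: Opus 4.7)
The plan is to first identify a canonical nonzero subfunctor $N \subseteq M$ on which every transition map is an isomorphism, and then use codirectedness to cut out a rank-one constant sub-submodule isomorphic to $k_P$.

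For each $p \in P$, set $N_p := \bigcap_{q \leq p} \operatorname{Im}(M_{\iota_{pq}}) \subseteq M_p$. Because $P$ is codirected, the collection $\{\operatorname{Im}(M_{\iota_{pq}}) : q \leq p\}$ is downward-filtered under inclusion (any two contain the image coming from a common lower bound), so by finite-dimensionality of $M_p$ the intersection stabilizes and equals $\operatorname{Im}(M_{\iota_{pq_0}})$ for some $q_0 \leq p$; injectivity of the transition maps then gives $N_p \cong M_{q_0} \neq 0$. I next check that $N$ is a subfunctor with $M_{\iota_{p'p}}$ restricting to a bijection $N_p \to N_{p'}$ for all $p \leq p'$. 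Preservation $M_{\iota_{p'p}}(N_p) \subseteq N_{p'}$ comes by taking $q \leq p'$, choosing $r \leq q, p$ via codirectedness, and using the factorization through $M_r$. Surjectivity $N_p \twoheadrightarrow N_{p'}$ comes by, given $y \in N_{p'}$, writing $y = M_{\iota_{p'p}}(x)$ with $x \in M_p$ unique by injectivity, and then verifying $x \in N_p$: for any $q \leq p$ write $y = M_{\iota_{p'q}}(z)$ and deduce $x = M_{\iota_{pq}}(z) \in \operatorname{Im}(M_{\iota_{pq}})$ from injectivity of $M_{\iota_{p'p}}$.

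Next I build a nonvanishing natural transformation $\lambda \colon k_P \to M$ factoring through $N$. Fix a basepoint $p_0 \in P$ and a nonzero $\lambda_0 \in N_{p_0}$. For arbitrary $p \in P$, choose $c \leq p_0, p$ by codirectedness, let $\mu_c \in N_c$ be the unique preimage of $\lambda_0$ under the isomorphism $M_{\iota_{p_0 c}}|_{N_c}$, and define $\lambda_p := M_{\iota_{pc}}(\mu_c)$. Independence from the choice of $c$ is verified by taking a third element $d \leq c, c'$, observing that the preimage $\mu_d$ of $\lambda_0$ in $N_d$ satisfies $M_{\iota_{cd}}(\mu_d) = \mu_c$ and $M_{\iota_{c'd}}(\mu_d) = \mu_{c'}$ by uniqueness, and chasing through $M_{\iota_{pd}}$. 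Naturality $M_{\iota_{p'p}}(\lambda_p) = \lambda_{p'}$ for $p \leq p'$ follows by using a common $c \leq p_0, p$ to compute both sides. Each $\lambda_p$ is nonzero because $M_{\iota_{pc}}|_{N_c}$ is an isomorphism and $\lambda_0 \neq 0$, so the natural transformation $k_P \to M$ sending the generator at each $p$ to $\lambda_p$ is a monomorphism.

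For the final assertion, if $P$ is also directed, then $P$ is itself a directed ideal of $P$, so by \cref{l:injective} $k_P$ is injective. Hence the monomorphism $k_P \hookrightarrow M$ just constructed splits, exhibiting $k_P$ as a direct summand of $M$. The principal obstacle is the well-definedness of the elements $\lambda_p$: it forces two separate uses of codirectedness (first to produce $c \leq p_0, p$, then to reconcile two such choices through a further common lower bound) together with the injectivity of the transitions of $M$ to pin down preimages uniquely. The rest of the argument is bookkeeping.
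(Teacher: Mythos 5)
Your proof is correct and follows essentially the same strategy as the paper's: both exploit codirectedness together with pointwise finite-dimensionality to locate a piece of $M$ on which transition maps are isomorphisms, then push a nonzero element around to build the monomorphism $k_P \hookrightarrow M$, and finish via \cref{l:injective}. The paper streamlines this by choosing a single point $p$ with $\dim M_p$ minimal (so that $M_c \to M_p$ is automatically an isomorphism for all $c \le p$) and propagating $m_p$ from there, whereas you construct the full ``stable image'' subfunctor $N_p = \bigcap_{q\le p}\operatorname{Im}(M_{\iota_{pq}})$ on which every transition map is an isomorphism; your version is a touch more machinery for the same payoff, but the well-definedness check via a common lower bound is the same in both.
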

\begin{proof}
Let $p$ be a point such that $M_p$ is of minimal dimension, and choose a non-zero element $m_p$ in $M_p$. For any other point $q$ in $P$, there is an element $c$ with $p,q\geq c$. Since $\dim M_c = \dim M_p$, the morphism $M_c\to M_p$ is an isomorphism. Thus $m_p$ induces an element $M_q = M_{\iota_{qc}}(M_{\iota_{pc}}^{-1}(m_p)) $ in $M_q$. Using the codirectedness property again, it is easy to check that this
does not depend on the choice of $c$, and that the elements $m_q$ define a
morphism $k_P \to M$. This yields a monomorphism $k_P\hookrightarrow M$. The last part is immediate from \cref{l:injective}. 
\end{proof}
The following dual result will be important. 
\begin{lem}
Suppose $P$ is directed and codirected. Let $M$ be a pointwise finite-dimensional persistence
module over $P$ with $M_p\neq 0$ for all $p\in P$, and suppose that $M_p \to M_q$ is an epimorphism for all $p\leq q$. Then $M$ has a copy of $k_P$ as a direct summand.
\label{l:directecodirected}
\end{lem}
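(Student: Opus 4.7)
The plan is to deduce this statement from \cref{lem:codirected} by applying the dualization functor $D$. Since $D$ is exact, contravariant, and satisfies $D^2M \cong M$ on pointwise finite-dimensional modules, the hypotheses transport nicely: epimorphisms $M_p \to M_q$ in $P$ dualize to monomorphisms $(DM)_q \to (DM)_p$, which is precisely what \cref{lem:codirected} consumes once we reverse the order.

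More concretely, first I would observe that $P^{\mathrm{op}}$ is codirected because $P$ is directed, and $P^{\mathrm{op}}$ is directed because $P$ is codirected. I would then consider $DM\colon P^{\mathrm{op}}\to\cvec$. It is pointwise finite-dimensional (since $M$ is), and $(DM)_p = M_p^* \neq 0$ for every $p$. For $p \le_{\mathrm{op}} q$ in $P^{\mathrm{op}}$ (equivalently $q \le p$ in $P$), the structure map $(DM)_p \to (DM)_q$ is the dual of $M_q \to M_p$, which is surjective by hypothesis, and so the dual is injective.

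Thus $DM$ meets the assumptions of \cref{lem:codirected} over $P^{\mathrm{op}}$, which is both directed and codirected. The lemma then provides a split monomorphism $k_{P^{\mathrm{op}}} \hookrightarrow DM$, i.e., a decomposition $DM \cong k_{P^{\mathrm{op}}} \oplus N$ for some pointwise finite-dimensional $N$. Applying $D$ to this decomposition and using $D^2 M \cong M$ yields $M \cong D(k_{P^{\mathrm{op}}}) \oplus DN$. Since $k_{P^{\mathrm{op}}}$ is one-dimensional on every object with identity structure maps, its dual is canonically $k_P$, and the conclusion follows.

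I do not expect any real obstacle: every nontrivial step is packaged inside \cref{lem:codirected}, and the only thing to verify is that $D$ preserves the relevant properties. The mildest point of care is checking that $D(k_{P^{\mathrm{op}}}) \cong k_P$, which amounts to noting that dualizing identities between one-dimensional spaces gives back identities.
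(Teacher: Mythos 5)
Your proposal is correct and follows essentially the same route as the paper's own proof: dualize, observe that $P^{\mathrm{op}}$ is both directed and codirected and that $DM$ satisfies the hypotheses of \cref{lem:codirected}, then apply $D$ again and use $D^2M\cong M$ together with $D(k_{P^{\mathrm{op}}})\cong k_P$. The only difference is that you spell out the verifications the paper leaves implicit.
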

\begin{proof}
Observe that $P^{{\rm op}}$ is both directed and codirected. It follows that $DM\colon P^{{\rm op}}\to \cvec$ satisfies the conditions of \cref{lem:codirected}. Hence $DM$ has a copy of $k_{P^{{\rm op}}}$ as a direct summand. Using that $M$ is pointwise finite-dimensional we get that $M\cong DDM$ has a copy of $D(k_{P^{{\rm op}}}) \cong k_P$ as a direct summand. 
\end{proof}

%\begin{lem}
%Let $S$ be a totally ordered set and $M\colon S\to \cvec$ pointwise finite-dimensional. If $M_x\neq 0$ for all $x$ and $M_{\iota_{yx}}\colon M_x\to M_y$ is surjective for all $x\leq y$ then 
%\[V = \varprojlim_{x\in S} M_x \neq 0. \]
%\end{lem}
%\begin{proof}
%Let $D = \{ \dim M_x : x\in S \}$ be the (finite or infinite) set of dimensions of the vector spaces $M_x$ for $x\in S$.
%For $d\in D$ let $S_d = \{ x \in S : \dim M_x = d \}$ and choose $x_d \in S_d$.
%Since the maps $M_x\to M_y$ are isomorphisms for $x,y\in S_d$, we have
%\[
%V = \varprojlim_{d\in D} M_{x_d}.
%\]
%For $d<e$ the map $M_{x_e}\to M_{x_d}$ is surjective, and the spaces $M_{x_d}$ are non-zero.
%If $D$ has a
%largest element $d$, $V \cong M_{x_d} \neq 0$. Otherwise an element of
%$V$ can be defined recursively by lifting elements of $M_{x_d}$ for
%$d\in D$ to $M_{x_{d'}}$, where $d'$ is the next largest element of $D$,
%and it follows that $V\neq 0$.
%\end{proof}

\section{Decomposition}
In this section we prove Theorem~\ref{t:decomp}.
Our argument is inspired by Ringel's proof of the corresponding result for covering functors, see \cite{RingelIzmir}.

First suppose $M$ is a pointwise finite-dimensional indecomposable module, and let $\theta$ be an endomorphism.
If $x$ is an object in $\cC$ then $\theta$ induces an endomorphism $\theta_x$ of $M_x$.
Since $M_x$ is finite-dimensional, Fitting's lemma gives a decomposition
\[
M_x = M'_x \oplus M''_x
\]
where $M'_x = \Ima(\theta_x^n)$ for $n\gg 0$ and $M''_x = \Ker(\theta_x^n)$ for $n\gg 0$.
Moreover $\theta_x$ induces an automorphism of $M'_x$ and a nilpotent endomorphism of $M''_x$.

Now if $\alpha:x\to y$ is a morphism in $\cC$ then $M_\alpha \theta_x = \theta_y M_\alpha$.
Moreover $M_\alpha$ sends $M'_x$ into $M'_y$ and $M''_x$ into $M''_y$.
Namely, taking $n$ to be sufficiently large for the decompositions of $M_x$ and $M_y$, we
have $M_\alpha \theta_x^n = \theta_y^n M_\alpha$, so if $m\in M''_x = \Ker(\theta_x^n)$,
then $\theta_y^n M_\alpha(m)=0$, so $M_\alpha(m)\in \Ker(\theta_y^n) = M''_y$.
If $m\in M'_x$ then $m = \theta_x^n(m')$, so $M_\alpha(m) = \theta_y^n M_\alpha(m') \in \Ima(\theta_y^n) = M'_y$.

It follows that the decomposition $M_x = M'_x\oplus M''_x$ for each object $x$ in $\cC$
gives a decomposition of $M = M' \oplus M''$ as a
persistence module. Thus if $M$ is indecomposable, $M = M'$ or $M = M''$.
In the first case $\theta_x$ is invertible for all $x$, so $\theta$ is invertible.

If $\theta$ is not invertible, then the above decomposition shows that $\theta_x$ is nilpotent for all $x$. Assume that $(1-\theta_x)(m) = 0$ for $m\neq 0$ and
let $n\geq 2$ be the smallest integer such that $\theta^n_x(m) = 0$. Then $\theta^{n-1}_x\circ (1-\theta_x)(m) = \theta^{n-1}(m) =0$, contradicting that
$n$ was the minimal such $n$. Thus $\ker (1-\theta_x) = 0$ and $1-\theta$ is invertible for all $x$. We conclude that $\End(M)$ is local. 

%If $\theta$ is not invertible, then $\phi\theta$ is not invertible for all endomorphisms $\phi$ of $M$. 
%Thus $(\phi\theta)_x$ is nilpotent for all $x$, so $1-(\phi\theta)_x$ is invertible. Thus $1-\phi\theta$ is invertible.
%Since this is true for all $\phi$, it follows that $\theta$ is in the Jacobson radical of $\End(M)$. 
%Thus the Jacobson radical of $\End(M)$ contains all non-invertible elements, so $\End(M)$ is a local ring.

Now let $M$ be a non-zero pointwise finite-dimensional persistence module, 
and let $D$ be the set of decompositions of $M$ into a direct sum of non-zero submodules.
That is, letting $S$ be the set of non-zero submodules of $M$,
$D$ is the set of subsets $I$ of $S$ such that $M = \bigoplus_{N\in I} N$.
We consider the relation $\le$ on $D$ with $I \le J$ if $J$ is a refinement of $I$. That is, if
each element of $J$ is contained in an element of $I$, or equivalently 
if each $N\in I$ is a direct sum of a subset of elements of $J$.
In this case there is a uniquely determined mapping $f_{IJ}:J\to I$ such that
for $N\in I$ we have 
\[
N = \bigoplus_{L\in f_{IJ}^{-1}(N)} L.
\]
Moreover $f_{IJ}$ is clearly surjective.
It is easy to see that this relation $\le$ defines a partial ordering on $D$.
Clearly $D$ is non-empty since it contains the element $\{M\}$ (as a unique minimal element).

To prove the theorem, it suffices to prove that $D$ contains a maximal element, for 
if $I\in D$ and $N\in I$ is decomposable, say $N = N_1\oplus N_2$, then
$J = (I \setminus \{N\})\cup \{N_1,N_2\}$ is in $D$, and $I < J$.
Thus if $I$ is a maximal element of $D$ then it is a decomposition of $M$ into indecomposable summands.

By Zorn's lemma, it suffices to prove that any non-empty chain $T$ in $D$ has an upper bound.
We consider the inverse limit
\[
L = \varprojlim_{I\in T} I
\]
using the maps $f_{IJ}$. 
An element $\lambda\in L$ is given by $\lambda_I \in I$ for all $I \in T$, satisfying $f_{IJ}(\lambda_J) = \lambda_I$ for all $I\le J$ in $T$,
and we define
\[
M[\lambda] = \bigcap_{I\in T} \lambda_I,
\]
a submodule of $M$. 
We show that 
\[
M = \bigoplus_{\lambda\in L} M[\lambda].
\]

Suppose $x$ is an object in $\cC$ and we have a relation 
\[
m_1+\dots+m_n = 0
\]
with $m_i \in M[\lambda^i]_x$ for distinct $\lambda^i \in L$. 
For $i\neq j$ we have $\lambda^i \neq \lambda^j$, so $\lambda^i_I \neq \lambda^j_I$ for some $I$.
But then also $\lambda^i_J \neq \lambda^j_J$ whenever $I\le J$.
Repeating for all pairs $i\neq j$, and using that $T$ is a chain, there is some $J$ with $\lambda^1_J,\dots,\lambda^n_J$ distinct.
But then since $M$ is the direct sum of the elements of $J$, and $m_i \in M[\lambda^i]_x \subseteq (\lambda^i_J)_x$, 
we deduce that $m_i=0$ for all $i$.

Suppose that $m\in M_x$ and $m\neq 0$.
For any $I\in T$ we can write
\[
m = m_1+\dots+m_n
\]
with $n\ge 1$ and the $m_i$ non-zero and belonging to $(N_i)_x$ for distinct elements $N_i$ of $I$.
Moreover 
\[
n \le \dim \bigoplus_{i=1}^n (N_i)_x \le \dim M_x.
\]
Choose $I$ such that the decomposition of $m$ has $n$ maximal.
For any $J$ in $D$ with $I \le J$, the submodule $N_i$ breaks up as a direct sum of elements of $J$, but the element $m_i$
does not become a non-trivial sum of terms. Thus $m_i$ must belong to one of the submodules in $J$.
This defines an element $\lambda^i \in L$, and $m_i \in M[\lambda^i]_x$. Thus $m\in\sum_{\lambda\in L} M[\lambda]_x$.

Thus, as claimed, $M = \bigoplus_{\lambda\in L} M[\lambda]$.
We now delete any terms from the sum which are zero. 
Letting $U = \{ M[\lambda] : \text{$\lambda\in L$ and $M[\lambda]\neq 0$}\}$
we have $M = \bigoplus_{N \in U} N$ and so $U\in D$. Clearly $U$ is an upper bound for $T$, as required.

\section{Decomposition into interval modules}
In this section we prove Theorem~\ref{t:totorder}. Let $M\colon S\to \cvec$ for a totally ordered set $S$. 
%Given a totally ordered set $S$ we consider it as the set of objects in a category $\cC$ in the natural way, so with
%\[
%\Hom(x,y) = \begin{cases}
%\{ \iota_{yx} \} & (x\le y) \\
%\varnothing & (x \not\le y)
%\end{cases}
%\]
%Given an interval $I$ in $S$ (a convex subset) we write $k_I$ for the \emph{interval module} which is 1-dimensional at points on $I$,
%zero at points outside $I$ and with the the morphisms $\iota_{yx}$ for $x,y\in I$ sent to the identity map.
%
%
%We say that $I$ is an \emph{ideal} in $S$ if $s\le x$ with $s\in S$ and $x\in I$ implies $s\in I$.
%In this case $k_I$ is an injective persistence module, since
%\begin{align}
%\Hom(M,k_I) = \lim_{\substack{\longleftarrow \\ x\in I}} D(M_x) \cong D( \lim_{\substack{\longrightarrow \\ x\in I}} M_x )
%\label{eq:exact}
%\end{align}
%which is a composition of exact functors.
%
The support of an indecomposable persistence module over a totally ordered set must necessarily be an interval. Hence, it suffices to show that if $M$ is indecomposable with support $I$, then $M$ is isomorphic to $k_I$.  Furthermore, we may assume without loss of generality that the support of $M$ is the whole of $S$. 

We show first that if $S$ has a minimal element $s$, then $M$ is isomorphic to $k_S$. 
Since $M_s\neq 0$ we can choose $0\neq m\in M_s$. Let $J=\{x\in S\mid M_{\iota_{xs}}(m)\neq 0\}$ and define a monomorphism $k_J\to M$, by sending the canonical basis element of the vector space $(k_S)_x$ to $M_{\iota_{xs}}(m)$. The constant module $k_J$ is injective by \cref{l:injective}, so the morphism is a split monomorphism. Since $M$ is indecomposable, it must be an isomorphism. We conclude that $M\cong k_J=k_S$.

Next let $M$ be a pointwise finite-dimensional indecomposable persistence module. 
We will show that the map $M_{\iota_{yx}}\colon M_x\to M_y$ is surjective for all $x<y$.
Consider the restriction $M'$ of $M$ to $S' = \{ s\in S : s\ge x\}$.
This is a pointwise finite-dimensional persistence module over $S'$, so it is a direct sum of indecomposables.
Take one of the indecomposable summands $N$ of $M'$.
If $N_x=0$ then the projection and inclusion maps $M'\to N\to M'$ 
extend to give maps $M\to N\to M$, so $N$ is a summand of $M$, a contradiction.
Thus by the remark above, $N_x$ is an interval module.
Thus $M'$ is isomorphic to a direct sum of interval modules for intervals with minimal element $x$.
This shows that the maps $M_x\to M_y$ are surjective for all $x<y$.  The result now follows from \cref{l:directecodirected} and \cref{t:decomp}.

%We now show that any pointwise finite-dimensional indecomposable persistence module $M$ is an interval module.
%By passing to the support of $M$ we may suppose that $M_x\neq 0$ for all $x\in S$.
%By \cref{l:inverselimit} the following limit is non-zero
%\[
%V = \varprojlim_{x\in S} M_x.
%\]
%We show that this limit is non-zero. 
%Let $D = \{ \dim M_x : x\in S \}$ be the (finite or infinite) set of dimensions of the vector spaces $M_x$ for $x\in S$.
%For $d\in D$ let $S_d = \{ x \in S : \dim M_x = d \}$ and choose $x_d \in S_d$.
%Since the maps $M_x\to M_y$ are isomorphisms for $x,y\in S_d$, we have
%\[
%V = \lim_{\substack{\longleftarrow \\ d\in D}} M_{x_d}.
%\]
%For $d<e$ the map $M_{x_e}\to M_{x_d}$ is surjective, and the spaces $M_{x_d}$ are non-zero. Since an element of $V$ can
%be defined recursively by lifting elements of $M_{x_d}$ to $M_{x_{d+1}}$, it follows that $V\neq 0$.
%Now choose a non-zero element $v\in V$, say corresponding to elements $v_x\in M_x$ for all $x\in S$.
%Let $I = \{ x\in S : v_x \neq 0\}$. This is an ideal in $S$.
%Now the map $k_I \to M$ sending the canonical basis element of the one-dimensional vector space $(k_I)_x$ 
%to $v_x$ is a monomorphism. But $k_I$ is injective, so $M\cong k_I$. This, together with \cref{t:decomp}, concludes the proof of \cref{t:totorder}.

\section{Decomposition of Middle Exact Bi-Modules}
In this section we prove \cref{thm:block}. Let $S$ and $T$ be totally ordered sets and let $P=S\times T$ denote their product.

\begin{dfn}
A persistence module $M\colon P\to \cvec$ is \emph{middle exact} if 
\begin{equation}
0\rightarrow M_a \xrightarrow{M_{\iota_{ba}}\oplus M_{\iota_{ca}}} M_b\oplus M_c \xrightarrow{(M_{\iota_{db}}, -M_{\iota_{dc}})} M_d\rightarrow 0
\label{eq:middlex}
\end{equation}
is middle exact (i.e. exact over the middle term) whenever $a=(x,y)$, $b=(x,y')$, $c=(x',y)$ and $d=(x', y')$. The trivial vector spaces of \cref{eq:E} have been included for convenience as we will also consider the case that \cref{eq:E} is in fact short exact. 
\label{def:middleex}
\end{dfn}
\begin{dfn} A non-empty subset $I\subseteq P$ is a \emph{block} if:
\begin{enumerate}
\item $I=J_S\times J_T$ for interval ideals $J_S$ and $J_T$, 
\item $I=J_S\times J_T$ for interval filters $J_S$ and $J_T$,
\item $I=J_S \times T$ for an interval $J_S$, 
\item $I=S\times J_T$ for an interval $J_T$. 
\end{enumerate}
We shall refer to these as blocks of type death (\sdb), birth (\sbb), vertical ({\svb}) and horizontal (\shb), respectively. Observe that one block may be of several types.
\end{dfn}
We say that $k_I$ is a \emph{block module} whenever $I$ is a block. Observe that if $I$ is of type {\sdb}, then $I$ is a directed ideal. Hence, $k_I$ is injective by \cref{l:injective}.
%This shows that $k_I$ is an injective module by the same argument as in \cref{eq:exact}. It is also easy to see that $k_I$ is projective whenever $I$ is of type {\sbb} and the persistence modules are pointwise finite-dimensional: 
%For pointwise finite-dimensional modules, there is a natural isomorphism
%\[\Hom(k_I, M) \cong  \Hom(DM, Dk_I)\]
%given by $f\mapsto Df$. It follows from the observation that $Dk_I$ is a block module of type {\sdb} in the dual setting that the right-hand side is a composition of exact functors. 
For $x\in S$ and $y\in T$ define subposets
\begin{align*}
(x,y)^{\overrightarrow{\leftarrow}}&:= \left(\{x\}\times (-\infty, y]\right)\cup \left((-\infty, x]\times \{y\}\right)\subseteq S\times T\\
(x,y)^{\overleftarrow{\rightarrow}}&:= \left(\{x\}\times [y, \infty)\right)\cup \left([x, \infty)\times \{y\}\right)\subseteq S\times T.\end{align*}
Recall that a non-empty subset $I$ of $(x,y)^{\overrightarrow{\leftarrow}}$ or $(x,y)^{\overleftarrow{\rightarrow}}$ is an interval if it is convex and connected. 
\begin{lem}
Let $M\colon (x,y)^\star \to \cvec$ be pointwise finite-dimensional and indecomposable for $\star\in \{\rightleftarrows, \leftrightarrows\}$. Then $M\cong k_I$ for some interval $I$. 
\label{thm:zz}
\end{lem}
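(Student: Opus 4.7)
The plan is to combine Theorem~\ref{t:totorder}, applied separately to each totally ordered ``arm'' of the zigzag, with a direct construction of a splitting for a carefully chosen interval submodule. By replacing $M$ with its dual if necessary I may assume $\star = \overleftarrow{\rightarrow}$, so that $c := (x,y)$ is the minimum of $L := (x,y)^{\star}$ and $L = V\cup H$, where $V = \{x\}\times[y,\infty)$ and $H = [x,\infty)\times\{y\}$ are totally ordered and meet only at $c$. The key geometric observation I shall use is that no element of $V\setminus\{c\}$ is comparable in $L$ with any element of $H\setminus\{c\}$. If $M_c = 0$, the support of $M$ is therefore contained in $(V\setminus\{c\})\cup(H\setminus\{c\})$ and $M$ splits as a direct sum of modules supported in each arm; indecomposability puts $M$ in a single arm, and Theorem~\ref{t:totorder} gives $M\cong k_I$ for an interval $I$.

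Assume now $M_c\ne 0$. Apply Theorem~\ref{t:totorder} to $M|_V$ and $M|_H$ to get $M|_V = \bigoplus_i k_{V_i}$ and $M|_H = \bigoplus_j k_{H_j}$. For any summand $k_{V_i}$ with $c\notin V_i$, the inclusion and projection realising it as a summand of $M|_V$ extend by zero across $H\setminus\{c\}$ to realise it as a summand of $M$; naturality is immediate from the geometric observation together with $(k_{V_i})_c = 0$. Indecomposability of $M$ combined with $M_c\ne 0$ rules this out, so every $V_i$ contains $c$, and symmetrically every $H_j$ contains $c$. Because $c$ is the minimum of each arm, the structure map $M_c\to M_p$ is then surjective for every $p\in L$.

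For $m\in M_c\setminus\{0\}$ set $I_m := \{p\in L : M_{\iota_{pc}}(m)\ne 0\}$; this is convex (a routine consequence of the factorisation $M_{\iota_{rc}} = M_{\iota_{rq}}M_{\iota_{qc}}$) and, containing the minimum $c$, an ideal, hence an interval. Sending $1$ at $p\in I_m$ to $M_{\iota_{pc}}(m)$ defines a monomorphism $\iota_m : k_{I_m}\hookrightarrow M$. Using surjectivity of $M_c\to M_p$, a splitting $\pi : M\to k_{I_m}$ of $\iota_m$ is equivalent to a linear functional $\pi_c : M_c\to k$ with $\pi_c(m) = 1$ that vanishes on $K^V_m + K^H_m$, where $K^V_m := \bigcup_{p\in V\cap I_m}\ker(M_c\to M_p)$ and $K^H_m$ is the analogue on $H$ (each is a subspace, being a union of a chain of subspaces); naturality at points outside $I_m$ is automatic since $I_m$ is an ideal. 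To force $m\notin K^V_m + K^H_m$, choose $m$ to lie in the smallest nonzero member of the flag $\{\ker(M_c\to M_p)\}_{p\in V}$, or any nonzero $m$ if that flag is trivial. Then $K^V_m = 0$, so the condition reduces to $m\notin K^H_m$, which holds by the very construction of $K^H_m$. The functional $\pi_c$ therefore exists, $k_{I_m}$ is a direct summand of $M$, and indecomposability forces $M\cong k_{I_m}$.

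The main obstacle is precisely this choice of $m$: a generic element can satisfy $K^V_m + K^H_m = M_c$ and block the construction of $\pi_c$, as one sees already on the three-point poset $\{c,p,q\}$ with $M_c = k^2$ and the two structure maps having distinct one-dimensional kernels. Picking $m$ inside the minimal nonzero kernel on one side collapses that half of the obstruction entirely and is the key idea that makes the direct splitting argument go through. The remaining ingredients---convexity of $I_m$, naturality of $\iota_m$ and $\pi$, and the zero-extension used in the preliminary reduction---are all routine once one has recorded that the two arms meet only at $c$.
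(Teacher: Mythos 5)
Your proof is correct, but it takes a genuinely different route from the paper. Both arguments begin by dualizing so that the corner $(x,y)$ is an extremal vertex and both invoke Theorem~\ref{t:totorder} on each totally ordered arm, but the case analysis and the finishing move differ. The paper orients the corner as a \emph{maximum} and splits on whether some structure map into $(x,y)$ has a nonzero kernel: if so, one arm contributes an interval summand avoiding $(x,y)$; if not (all maps into the corner injective), finite-dimensionality of $M_{(x,y)}$ is used to subdivide the two arms into finitely many pieces on which $M$ is constant, reducing to a finite $A_n$-quiver, and the classical classification of $A_n$-representations finishes. You instead orient the corner as the \emph{minimum} $c$, dispose of $M_c=0$ immediately, and in the case $M_c\neq 0$ use the zero-extension observation not to exhibit a summand but to conclude that every interval summand of $M|_V$ and $M|_H$ contains $c$, hence that every map out of $c$ is surjective. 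The payoff is a hands-on construction of a retraction: a splitting of $k_{I_m}\hookrightarrow M$ reduces (by that surjectivity) to a single functional $\pi_c$ on $M_c$ with $\pi_c(m)=1$ annihilating $K^V_m+K^H_m$, and your choice of $m$ inside the smallest nonzero kernel of the $V$-flag kills $K^V_m$ outright, while $m\notin K^H_m$ is automatic from the definition of $I_m$. This dispenses entirely with the appeal to the $A_n$ classification and the reduction to a finite quiver, at the cost of the slightly delicate choice of generator, which you correctly identify as the crux. Both arguments are sound; yours is more self-contained, the paper's is shorter once the $A_n$ result is granted.
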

\begin{proof}
The two cases are dual so it suffices to prove it for the case $\star =~\rightleftarrows$. Let $M^\ell$ denote the restriction of $M$ to $(-\infty, x]\times \{y\}$. Assume that $\ker M_\alpha \neq 0$ for some $\alpha\colon (t,y) \to (x,y)$. Then $\ker M^\ell_\alpha \neq 0$, and by \cref{t:totorder}, $M^\ell$ has a summand $k_I$, where $I\subseteq (-\infty, x)\times \{y\}$ is an interval. Since $(x,y)\not\in I$, this shows that $k_I$ extends to a summand of $M$ and thus $M\cong k_I$. The corresponding argument applies if $\ker M_\alpha \neq 0$ for some $\alpha\colon (x,t) \to (x,y)$. To conclude the proof it suffices to consider the case that $M_\alpha$ is injective for all $\alpha\colon p\to (x,y)$. As $\dim M_{(x,y)} < \infty$, we can choose indices 
 \begin{align*}
  -\infty &= a'_{0} < a'_{1} < \cdots < a'_{{n-1}} < a'_{n}= y\\
   -\infty &= a_{0} < a_{1} < \cdots < a_{{n-1}} < a_{n}= x
\end{align*}
such that $M_{(x,t)} \to M_{(x,t')}$ and $M_{(s,y)} \to M_{(s',y)}$ are isomorphisms whenever $t, t'\in (a'_{i}, a'_{{i+1}})$ and $s,s'\in (a_{i}, a_{i+1})$. Thus, by choosing $b_i\in (a_i, a_{i+1})$ and $b_i'\in (a'_i, a'_{i+1})$, we get that $M$ is completely described by the following persistence module
\begin{center}
\begin{tikzpicture}[scale=0.5][baseline= (a).base]
\node[scale=0.8] (a) at (0,0){
 \begin{tikzcd}
M_{(b_0,y)}\ar[r]& M_{(a_1,y)}\ar[r]  & M_{(b_1,y)}\ar[r]& \cdots \ar[r]& M_{(b_{n-1},y)}\ar[r]&M_{(x,y)}\\
M_{(x,b'_0)}\ar[r] & M_{(x,a'_1)}\ar[r]  & M_{(x,b'_1)}\ar[r]& \cdots \ar[r]& M_{(x,a'_{n-1})}\ar[r]& M_{(x, b'_{n-1})}\ar[u]
\end{tikzcd}};
\end{tikzpicture}
\end{center}
A decomposition of this persistence module lifts to a decomposition of $M$. It follows from the representation theory of the linear quiver $A_n$ that $M\cong k_I$ for some interval $I$, see for example \cite[Theorem 1.1]{ringdynkin}.
\end{proof}

For $(s,t)\in S\times T$, let ${\bf v}_s = \{ (s,y) \mid y\in T\}$, ${\bf h}_t = \{ (x,t)\mid x\in S\}$, and let $M^{\bf v_s}$ and $M^{\bf h_t}$ denote the respective restrictions of $M$ to ${\bf v}_s$ and ${\bf h}_t$. 
\begin{lem}
Assume that $M$ is pointwise finite-dimensional and middle exact. Let $s\in S, t\in T$, and let $J_S\subseteq S$ and $J_T\subseteq T$ be intervals. 
\begin{enumerate}
\item Assume that there exists an upper bound for $J_T$ in $T-J_T$. A monomorphism $h\colon k_{\{s\}\times J_T} \hookrightarrow M^{\bf v_s}$ lifts to a monomorphism \[h\colon k_{(-\infty, s]\times J_T} \hookrightarrow M|_{(-\infty, s]\times T}.\]
\item Assume that there exists an upper bound for $J_S$ in $S-J_S$.  A monomorphism $h\colon k_{J_S\times \{t\}} \hookrightarrow M^{\bf h_t}$ lifts to a monomorphism \[h\colon k_{J_S\times (-\infty, t]} \hookrightarrow M|_{S\times (-\infty, t]}.\]
\end{enumerate}
\label{lem:lift}
\end{lem}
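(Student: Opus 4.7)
The two parts of the lemma are symmetric under the swap of $S$ and $T$, so I focus on part~(1). Set $P' = (-\infty, s]\times J_T$, and fix some upper bound $y^* \in T \setminus J_T$ of $J_T$. Define a sub-persistence-module $K$ of $M|_{P'}$ by
\[
K_{(x,y)} = \ker\bigl(M_{\iota_{(x,y^*)(x,y)}}\bigr).
\]
Naturality of $h$ (together with $(k_{\{s\}\times J_T})_{(s,y^*)} = 0$) forces $h_{(s,y)} \in K_{(s,y)}$ for each $y \in J_T$, so $h$ factors through $K|_{\{s\}\times J_T}$. The plan is to build a monomorphism $\tilde h \colon k_{P'} \hookrightarrow K$ extending $h$; composing with $K \hookrightarrow M|_{P'}$ and extending by zero on $(-\infty, s]\times (T \setminus J_T)$ then yields the required lift into $M|_{(-\infty,s]\times T}$.

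The first key step is to show $K$ has surjective horizontal structure maps. For $(x,y)\leq (x',y)$ in $P'$ and $v\in K_{(x',y)}$, apply middle exactness to the square $\{(x,y),(x,y^*),(x',y),(x',y^*)\}$ on the pair $(0,v)\in M_{(x,y^*)}\oplus M_{(x',y)}$, which lies in the kernel of the middle map since $v\in K_{(x',y)}$; this produces $n\in K_{(x,y)}$ with $M_{\iota_{(x',y)(x,y)}}(n) = v$. It follows that for each $y\in J_T$ the horizontal slice $K|_{(-\infty,s]\times\{y\}}$ is a pointwise finite-dimensional, pointwise nonzero persistence module on the directed and codirected poset $(-\infty,s]$ with surjective structure maps. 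By \cref{t:totorder} it decomposes into interval modules, and the surjectivity constraint forces each summand's support to be downward closed; only the full-support summands (of type $k_{(-\infty,s]}$) are nonzero at~$s$, so we may refine the decomposition to arrange that one such summand $C_y$ has generator $h_{(s,y)}$ at $(s,y)$. Write $c_y^{(x)}\in K_{(x,y)}$ for its value at~$(x,y)$, so $M_{\iota_{(s,y)(x,y)}}(c_y^{(x)}) = h_{(s,y)}$.

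The main obstacle is to arrange that the $c_y^{(x)}$ are vertically coherent, i.e., $M_{\iota_{(x,y')(x,y)}}(c_y^{(x)}) = c_{y'}^{(x)}$ for $y\le y'$ in $J_T$: both sides lie in $K_{(x,y')}$ and have the same image $h_{(s,y')}$ under $M_{\iota_{(s,y')(x,y')}}$, but may differ by an element of $\ker(K_{(x,y')}\to K_{(s,y')})$. I would fix this by a Zorn argument on pairs $(Q,g)$ with $Q\supseteq\{s\}\times J_T$ and $g\colon k_Q\hookrightarrow K|_Q$ extending $h$, ordered by extension. A maximal element $(Q^*,g^*)$ exists; supposing for contradiction $(x_0,y_0)\in P'\setminus Q^*$, one applies middle exactness to squares $\{(x_0,y_0),(x_0,y_1),(x_1,y_0),(x_1,y_1)\}$ for $(x_1,y_1)\in Q^*$ (extending $Q^*$ along the way at $(x_0,y_1)$ if needed) to construct a value $g_{(x_0,y_0)}$ simultaneously compatible with every $g^*_{(x_1,y_1)}$; compatibility with the already-present values on $\{s\}\times J_T$ is automatic from naturality of $h$. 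Finally, a minor point for the zero-extension: we need $M_{\iota_{(x,y')(x,y)}}(\tilde h_{(x,y)})=0$ for \emph{every} $y'\in T\setminus J_T$ above $y$, which is automatic for $y'\ge y^*$ since $\tilde h\in K$; the remaining values $y'\in(J_T,y^*)\cap(T\setminus J_T)$ are handled either by choosing $y^*$ to be the minimum of $\{y'\in T\setminus J_T:y'\ge J_T\}$ (if it exists) or by redefining $K$ as the intersection of the analogous kernels over all such $y'$, preserving the surjectivity argument by pointwise finite-dimensionality.
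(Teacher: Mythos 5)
Your starting move is on the right track, but the submodule $K_{(x,y)}=\ker(M_{(x,y)}\to M_{(x,y^*)})$ keeps too little information and this creates a genuine gap. The paper defines, for $p=(p_1,p_2)$, the smaller subspace
\[
E_p = M_{\pi_J(p)}^{-1}\bigl(\Ima h_{(s,p_2)}\bigr)\cap\bigcap_{\epsilon>J_T}\ker M_{\alpha_{p^\epsilon}},
\]
i.e.\ it records \emph{both} the vertical-kernel condition \emph{and} the requirement that the horizontal map to $(s,p_2)$ lands in the one-dimensional image of $h$. That second constraint is exactly what makes the vertical structure maps $E_{(x,y)}\to E_{(x,y')}$ surjective: given $m\in E_{(x,y')}$, its image in $M_{(s,y')}$ is a known multiple of $h_{(s,y')}(1)$, so you can feed $m$ and the corresponding multiple of $h_{(s,y)}(1)$ into one application of middle exactness for the rectangle with corners $(x,y),(s,y),(x,y'),(s,y')$ and lift. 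With surjectivity in both directions and $E_p\neq 0$, \cref{l:directecodirected} immediately hands you a $k_{(-\infty,s]\times J_T}$-summand, and a scalar adjustment makes it agree with $h$. Your $K$ is horizontally surjective but \emph{not} vertically surjective, which is precisely why you hit the ``vertical coherence'' obstacle.

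The Zorn argument you propose to repair this does not close the gap. After fixing a maximal $(Q^*,g^*)$ and a point $(x_0,y_0)\notin Q^*$, you need one element of $K_{(x_0,y_0)}$ whose images agree with $g^*$ at \emph{every} $(x_1,y_1)\in Q^*$ with $(x_1,y_1)\geq(x_0,y_0)$. One application of middle exactness only gives compatibility with a single rectangle; the intersection of all the resulting affine cosets must be shown nonempty, which you do not do, and the parenthetical ``extending $Q^*$ along the way at $(x_0,y_1)$ if needed'' is circular (you would be invoking, at $(x_0,y_1)$, the extension step you are in the middle of establishing). This is exactly the difficulty that \cref{l:directecodirected} is designed to absorb, and it is not an optional convenience. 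Finally, what you call a ``minor point'' about the zero-extension is handled in the paper by taking the intersection over \emph{all} upper bounds $\epsilon>J_T$ (and using pointwise finite-dimensionality to see the intersection stabilizes); your second suggested fix is essentially this, so it should be built into the definition from the start rather than patched on at the end. In short: add the image constraint to your $K$, verify surjectivity in both directions via middle exactness, and let \cref{l:directecodirected} do the rest; the horizontal-slice decomposition and the Zorn argument are then unnecessary.
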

\begin{proof}
We prove the first case; the second case is symmetrical. For $p=(p_1, p_2)\in (-\infty,s]\times J_T$ let $\pi_{J}(p)\colon p\to (s,p_2)$. Write $\epsilon>J_T$ if $\epsilon\in T-J_T$ and $\epsilon$ is an upper bound for $J_T$. For $\epsilon > J_T$ define 
\[\alpha_{p^\epsilon}\colon p \to (p_1, \epsilon)\]
\[E_p^\epsilon = M_{\pi_{J}(p)}^{-1}(\Ima h_{(s,p_2)}) \bigcap \ker M_{\alpha_{p^\epsilon}}.\]
It follows from the middle exactness condition on $M$ that $E_p^\epsilon \neq 0$, and that $E_q^\epsilon \to E_p^\epsilon$ is a surjection for all $q\leq p$ in $(-\infty,s]\times J_T$. Now consider 
\[E_p := \bigcap_{\epsilon>J_T} E_p^\epsilon.\] Since $M$ is pointwise finite-dimensional, there exists an $\epsilon_p>J_T$ such that $E_p = E^{\epsilon_p}_p$, and therefore it is also true that $E_p\neq 0$, and that the map $E_q\rightarrow E_p$ is a surjection for all $q\leq p$. 
%
%Let $D = \{ \dim (E_p)_x \mid x\in (-\infty, s]\times J_T \}$ be the (finite or infinite) set of dimensions of the vector spaces $(E_p)_x$ for $x\in (-\infty, s]\times J_T$.
%For $d\in D$ let $S_d = \{ x \in (-\infty, s]\times J_T \mid \dim (E_p)_x = d \}$ and choose $x_d \in S_d$.
%Since the maps $M_x\to M_y$ are isomorphisms for $x\leq y\in S_d$, we have
%\[
%V = \varprojlim_{d\in D} M_{x_d}.
%\]
%For $d<e$ the map $M_{x_e}\to M_{x_d}$ is surjective, and the spaces $M_{x_d}$ are non-zero.
%If $D$ has a
%largest element $d$, $V \cong M_{x_d} \neq 0$. Otherwise an element of
%$V$ can be defined recursively by lifting elements of $M_{x_d}$ for
%$d\in D$ to $M_{x_{d'}}$, where $d'$ is the next largest element of $D$,
%and it follows that $V\neq 0$.
%
Since $(-\infty, s]\times J_T$ is a product of totally ordered sets, it is both directed and codirected.   Hence it follows from \cref{l:directecodirected} that $E\colon (-\infty, s]\times J_T\to \cvec$ has a copy of $k_{(-\infty, s]\times J_T}$ as a summand. The multiple of the canonical inclusion $k_{(-\infty, s]\times J_T} \hookrightarrow E$ which agrees with $h$ on $\{ s \}\times J_T$  defines a lift of $h$.\end{proof}
\begin{lem}
Let $M$ be pointwise finite-dimensional, middle exact and indecomposable. If there exist $a,b,c,d$ as in \cref{def:middleex} such that $\ker M_{\iota_{ba}}\cap \ker M_{\iota_{ca}} \neq 0$, then $M\cong k_I$ where $I$ of type \sdb. 
\label{lem:splitinj}
\end{lem}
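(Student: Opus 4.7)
The plan is to construct a monomorphism $k_I\hookrightarrow M$ for a suitable death block $I$ containing $a=(x,y)$; since every death block is a directed ideal, \cref{l:injective} gives that $k_I$ is injective, so the monomorphism splits, and the indecomposability of $M$ then forces $M\cong k_I$.

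To build the monomorphism, fix $0\ne m\in\ker M_{\iota_{ba}}\cap\ker M_{\iota_{ca}}$, and let $J_S,J_T$ be ideal intervals of $S,T$ with $x\in J_S$, $x'\notin J_S$, $y\in J_T$, $y'\notin J_T$ (the largest choice being $J_S=(-\infty,x')$ and $J_T=(-\infty,y')$). Set $I=J_S\times J_T$, which is a block of type \sdb. In analogy with the proof of \cref{lem:lift}, define on $I$ the pointwise sub-persistence-module of $M$ given by
\[E_p=\ker M_{\iota_{(p_1,y')p}}\cap\ker M_{\iota_{(x',p_2)p}}\subseteq M_p,\qquad p=(p_1,p_2)\in I,\]
and $E_p=0$ outside $I$. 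A direct functoriality check shows that $E$ is a sub-persistence-module of $M|_I$. I then aim to verify the hypotheses of \cref{l:directecodirected}: that each structure map $E_p\to E_q$ ($p\le q$ in $I$) is surjective, and that $E_p\ne 0$ for every $p\in I$. Surjectivity is proved by the same middle-exactness lift as in the proof of \cref{lem:lift}: given $v\in E_q$, the pair $(0,v)$ lies in the kernel of the relevant "second map" of a middle-exact rectangle (precisely because of the kernel conditions in the definition of $E$), so middle exactness gives a preimage in $M_p$, which a short diagram chase confirms lies in $E_p$. Non-vanishing holds at $p=a$ by hypothesis; for $p\le a$ one lifts $m$ into $E_p$ by iterated applications of middle exactness to the rectangle with corners $p,(p_1,y),(x,p_2),a$, the kernel conditions on $m$ being exactly what is needed to keep the lift inside $E$; and for the remaining $p\in I$ (where $p_1>x$ or $p_2>y$) one propagates non-vanishing along the surjective structure maps already established.

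Since $I$ is a product of totally ordered sets it is both directed and codirected, so \cref{l:directecodirected} produces $k_I$ as a direct summand of $E$. The composition $k_I\hookrightarrow E\hookrightarrow M|_I$ extends by zero to a morphism of persistence modules on all of $S\times T$: for any $p\in I$ and $q\notin I$ with $p\le q$, one has $q_1\ge x'$ or $q_2\ge y'$, so $q\ge(x',p_2)$ or $q\ge(p_1,y')$, and hence the kernel conditions defining $E_p$ guarantee that $M_{\iota_{qp}}$ annihilates the image; this is precisely what makes the extension well defined. Composing injectivity of $k_I$ with indecomposability of $M$ yields $M\cong k_I$, completing the proof. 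The principal obstacle I foresee is the non-vanishing of $E_p$ at points of $I$ not below $a$: here the elementary middle-exactness lift of $m$ is unavailable, and one must either shrink $I$ to the actual support of $M$ (and verify that this support is itself a block of type \sdb) or propagate non-vanishing by combining the already-proven surjectivity of $E$ with an auxiliary horizontal/vertical middle-exact rectangle.
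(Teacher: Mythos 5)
Your plan shares the paper's final step (build a monomorphism from a death block, invoke injectivity of $k_I$ and indecomposability of $M$), but it has a genuine gap precisely at the point you flag yourself, and neither of your two proposed repairs resolves it.

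The difficulty is that your extension-by-zero argument forces $I$ to be the \emph{maximal} candidate block $(-\infty,x')\times(-\infty,y')$: if $J_S$ were any proper ideal sub-interval (say $(-\infty,x+\delta]$ with $\delta<x'-x$), then a point $q$ with $x+\delta<q_1<x'$ and $q_2\le y$ lies outside $I$, lies above a point of $I$, and yet neither kernel condition in the definition of $E_p$ controls $M_{\iota_{qp}}$. So you must take $I=(-\infty,x')\times(-\infty,y')$. But then there is no reason whatsoever that $E_p\ne 0$ for $p\in I$ with $p\not\le a$; the indecomposable $M$ could, for instance, equal $k_{(-\infty,x]\times(-\infty,y]}$, in which case $E_p=0$ at every $p$ with $p_1>x$ or $p_2>y$. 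Surjectivity of the structure maps of $E$ only propagates non-vanishing \emph{downward}: from $E_q\ne 0$ and $E_p\twoheadrightarrow E_q$ you get $E_p\ne 0$ for $p\le q$, but this never reaches any $p\not\le a$. And ``shrinking $I$ to the support'' is circular — you would need to already know the support is a death block, and in any case a smaller $I$ reintroduces the extension-by-zero failure above.

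What the paper does instead is first restrict $M$ to the upper hook $(x,y)^{\leftrightarrows}$ and apply the zigzag decomposition (\cref{thm:zz}). Because the element $m$ dies in both arms, this restriction has a direct summand $k_J$ with $J=(\{x\}\times J_T)\cup(J_S\times\{y\})$, $J_S\subseteq[x,x')$, $J_T\subseteq[y,y')$. This is the key step you are missing: it \emph{identifies} the correct bounded extent $J_S, J_T$ of the block before any lifting happens. The monomorphism is then built on the corner region $J_S\times J_T$ by a direct middle-exactness argument (using the chosen summand $N\cong k_J$), and extended to the two infinite arms via \cref{lem:lift}. Your single-pass $E$-module construction cannot substitute for this identification step; it only works below $a$.
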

\begin{proof}
By assumption, the restriction of $M$ to $(x,y)^{\leftrightarrows}$ must contain a summand isomorphic to $k_J$, where $J=\left( \{x\}\times J_T\right) \cup \left( J_S \times \{y\}\right)$ and $J_S$ and $J_T$ are intervals satisfying:
\begin{itemize}
\item $x\in J_S$ is minimal and $J_S\subseteq [x, x')$,
\item $y\in J_T$ is minimal and $J_T\subseteq [y, y')$. 
\end{itemize}
We shall construct a monomorphism $k_I\hookrightarrow M$ where \[I=\left((-\infty, x]\cup J_S\right) \times \left((-\infty,y]\times  J_T\right).\] Since $k_I$ is injective, it follows that $M\cong k_I$. 

Consider the following subsets of $P$:
\begin{align*}
I_1=J_S\times J_T \qquad I_2 = (-\infty, x] \times J_T \qquad I_3 = \left((-\infty, x]\cup J_S\right) \times (-\infty, y].
\end{align*}
Observe that $I = I_1\cup I_2\cup I_3$. The proof proceeds in three steps. 

\textbf{Step 1: Constructing $k_{I_1}\hookrightarrow M$}. Let $N \subseteq M|_{(x,y)^{\leftrightarrows}}$ be such that $M|_{(x,y)^{\leftrightarrows}}=N\oplus N^\bullet$ and $N\cong k_{J}$, and choose $0\neq m \in N_{(x,y)}\subseteq M_{(x,y)}$. 
We shall show that $M_\alpha(m)\neq 0$ for all $\alpha\colon (x,y)\to p$ where $p\in J_S\times J_T$.  Assume for the sake of contradiction that $M_\alpha(m) = 0$ for $\alpha\colon (x,y)\to p=(p_1, p_2)$. By the middle exact sequence
\[M_{(x,y)} \to M_{(x,p_2)}\oplus M_{(p_1, y)} \to M_{(p_1, p_2)}\]
there exists an element $\hat{m} \in M_{(x,y)}$ such that $M_{\alpha'}(\hat{m}) = M_{\alpha'}(m)$ and $M_{\alpha''}(\hat{m}) = 0 $, for $(p_1,y) \xleftarrow{\alpha'} (x,y) \xrightarrow{\alpha''} (x,p_2)$. The first equality, together with the direct sum decomposition of $M|_{(x,y)^{\leftrightarrows}}$ and the injectivity of $N_{\alpha'}$, give $\hat{m} = m + n^\bullet$ for an $n^\bullet \in N^\bullet_{(x,y)}$. Substituting this into the second equality yields $M_{\alpha''}(m) = -M_{\alpha''}(n^\bullet)$. Since $M_{\alpha''}(m)\neq 0$, this contradicts  $M|_{(x,y)}^{\leftrightarrows} = N\oplus N^\bullet$. For any $\alpha\colon (x,y)\to (p_1, p_2)\not\in I_1$, it follows by commutativity that $M_\alpha(m) = 0$. Hence, we have a well-defined monomorphism $h\colon k_{I_1}\hookrightarrow M$ given by $h_{p}(1) = M_{\alpha}(m)$ for $\alpha\colon (x,y)\to p$. 

\textbf{Step 2: Constructing $k_{I_1\cup I_2}\hookrightarrow M$.} The $h$ of the previous step restricts to a monomorphism $h'\colon k_{\{x\}\times J_T}\hookrightarrow M^{\bf v_x}$. By (1) of \cref{lem:lift} this restriction extends to a monomorphism $h'\colon k_{I_2}\hookrightarrow M|_{(-\infty, x]\times T}$. This defines a lift of $h$ to $h\colon k_{I_1\cup I_2}\hookrightarrow M$. 

\textbf{Step 3: Constructing $k_{I_1\cup I_2\cup I_3}\hookrightarrow M$.} The $h$ of Step 2 restricts to a monomorphism $h''\colon k_{\left((-\infty, x]\cup J_S\right)\times \{y\}} \hookrightarrow M^{\bf h_y}$. By (2) of \cref{lem:lift} this restriction extends to a monomorphism $h''\colon k_{I_3}\hookrightarrow M|_{S \times (-\infty, y]}$.  This defines a lift of $h$ to $h\colon k_{I_1\cup I_2 \cup I_3}\hookrightarrow M$. 
\end{proof}
We also have the following dual lemma.
\begin{lem}
\label{lem:splitproj}
Let $M$ be pointwise finite-dimensional, middle exact and indecomposable. If there exist $a,b,c,d$ as in \cref{def:middleex} such that $\Coker ((M_{\iota_{db}}, -M_{\iota_{dc}}))\neq 0$, then $M\cong k_I$ where $I$ of type \sbb. 
\end{lem}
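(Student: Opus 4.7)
The plan is to reduce to \cref{lem:splitinj} by dualization. Let $DM\colon P^{\rm op}\to\cvec$ be the pointwise dual of $M$. Since the duality functor $D$ is exact and contravariant, and since $M$ is pointwise finite-dimensional, $DM$ is again pointwise finite-dimensional, indecomposable, and middle exact when viewed as a persistence module over $P^{\rm op} = S^{\rm op}\times T^{\rm op}$. (The middle exact square on $M$ through the corners $a,b,c,d$ dualizes to the middle exact square on $DM$ through the same four points but with the order reversed.)

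Next I would translate the cokernel hypothesis into the hypothesis of \cref{lem:splitinj} for $DM$. Applying $D$ to the non-surjective map $(M_{\iota_{db}},-M_{\iota_{dc}})\colon M_b\oplus M_c\to M_d$ yields a map $(DM)_d\to (DM)_b\oplus (DM)_c$ with non-trivial kernel. In $P^{\rm op}$ the element $d=(x',y')$ is the \emph{minimum} of the square and $a=(x,y)$ is the maximum; writing $b^\vee=b,\ c^\vee=c,\ a^\vee=a,\ d^\vee=d$ for the same elements viewed in $P^{\rm op}$, the hypothesis becomes
\[
\ker (DM)_{\iota^{\rm op}_{b^\vee d^\vee}} \cap \ker (DM)_{\iota^{\rm op}_{c^\vee d^\vee}} \neq 0,
\]
which is precisely the hypothesis of \cref{lem:splitinj} applied to $DM$ at the square $(d^\vee,b^\vee,c^\vee,a^\vee)$ of $P^{\rm op}$.

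Therefore \cref{lem:splitinj} gives $DM\cong k_J$ for a type \sdb{} block $J\subseteq P^{\rm op}$, i.e.\ $J = J_{S^{\rm op}}\times J_{T^{\rm op}}$ for interval ideals $J_{S^{\rm op}}\subseteq S^{\rm op}$ and $J_{T^{\rm op}}\subseteq T^{\rm op}$. Since ideals of $S^{\rm op}$ (resp.\ $T^{\rm op}$) are precisely filters of $S$ (resp.\ $T$), the same underlying subset, call it $I$, is of the form $I = J_S\times J_T$ with $J_S, J_T$ interval filters in $S, T$; that is, $I$ is a type \sbb{} block in $P$.

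Finally, using $M\cong DDM$ and the fact that $D$ carries the constant module on $J\subseteq P^{\rm op}$ to the constant module on the same underlying set viewed in $P$, we get $M\cong D(k_J)\cong k_I$, completing the proof. The only step requiring any care is the bookkeeping in the second paragraph: making sure that ``type \sdb{} block in $P^{\rm op}$'' transports correctly to ``type \sbb{} block in $P$'' under the reversal of the two totally ordered factors, and that the square $(a,b,c,d)$ witnessing the cokernel hypothesis for $M$ is precisely the square witnessing the kernel hypothesis for $DM$ with the roles of $a$ and $d$ interchanged.
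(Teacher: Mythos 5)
Your proof is correct and follows the same route as the paper: dualize, verify that the cokernel hypothesis for $M$ becomes the kernel-intersection hypothesis of \cref{lem:splitinj} for $DM$ over $P^{\rm op}$, note that type \sdb{} blocks in $P^{\rm op}$ correspond to type \sbb{} blocks in $P$, and conclude via $M\cong D^2M$. The paper's proof is essentially a one-line version of this; you have simply spelled out the bookkeeping (the translation of hypotheses and block types under order reversal) that the paper leaves implicit.
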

\begin{proof}
%By assumption, the restriction of $M$ to $(x',y')^{\rightleftarrows}$ must contain a summand isomorphic to $k_J$ where $J=\left( \{x'\}\times \langle t, y']\right) \cup \left( \langle s, x'] \times \{y'\}\right)$ and $s>x$ and $t>y$.  From this, one can construct an epimorphism $M\twoheadrightarrow k_I$ where $I={\langle s, \infty)\times \langle t, \infty)}$. Since $k_I$ is projective the result follows. This construction can be carried out explicitly following the proof of \cref{lem:splitinj} together with a dual version of \cref{lem:lift}.
Observe that $DM$ is middle exact whenever $M$ is, and that $I$ is a directed ideal in $(S\times T)^{\rm op}$. Since $M\cong D^2M$ we also have that $DM$ is indecomposable. In particular, $DM \cong Dk_I$ by \cref{lem:splitinj}, and thus $k_I \cong D^2(k_I) \cong D^2M \cong M$. 
\end{proof}
The previous two lemmas show that it suffices to consider the case where \cref{eq:middlex} is \emph{short exact}. 
Define persistence modules \[\Ima M^\leftarrow, \Ima M^\downarrow, \ker M^\rightarrow, \ker M^\uparrow\colon P \to \cvec\] by
\begin{align*}
\Ima M^\leftarrow_{(p_1, p_2)} &= \bigcap_{\alpha: (q, p_2)\to (p_1, p_2)} \Ima M_\alpha, \quad \ker M^\rightarrow_{(p_1, p_2)} = \bigcup_{\alpha: (p_1, p_2)\to (q, p_2)} \ker M_\alpha \\
\Ima M^\downarrow_{(p_1, p_2)} &= \bigcap_{\alpha: (p_1, q)\to (p_1, p_2)} \Ima M_\alpha, \quad \ker M^\uparrow_{(p_1, p_2)} = \bigcup_{\alpha: (p_1, p_2)\to (p_1, q)} \ker M_\alpha
\end{align*}
It is not hard to see that these are submodules of $M$. By definition, $M_{\alpha}$ maps $\Ima M^\leftarrow_{(p_1, p_2)}$ onto $\Ima M^\leftarrow_{(q, p_2)}$ for any $\alpha\colon (p_1, p_2)\to (q, p_2)$. Let $\alpha\colon (p_1, p_2)\to (p_1,q)$. Since $M$ is pointwise finite-dimensional, there exists $s\in S$ such that $\Ima M^\leftarrow_{(p_1, p_2)} = \Ima M_\beta$ and $\Ima M^\leftarrow_{(p_1, q)} = \Ima M_{\beta'}$ where $\beta\colon (s, p_2)\to (p_1, p_2)$ and $\beta'\colon (s, q)\to (p_1,q)$. This shows that $\Ima M^\leftarrow$ is a submodule of $M$. The other cases are similar.

Following the same line of arguments we also have the following simple lemma.
\begin{lem}
Let $M$ be pointwise finite-dimensional, middle exact and assume that \cref{eq:middlex} is short exact for all $a,b,c,d$ as in \cref{def:middleex}. Then $\ker M^\rightarrow \cap \ker M^\uparrow =0$ and $M = \Ima M^\leftarrow+ \Ima M^\downarrow$.
\label{lem:shortexact}
\end{lem}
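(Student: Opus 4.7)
The plan is to prove the two assertions at each point $p=(p_1,p_2)\in P$ separately, in both cases by a single application of the short exactness hypothesis to a carefully chosen square.

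For the first assertion, I would fix $m\in \ker M^\rightarrow_p\cap \ker M^\uparrow_p$. Unpacking the definitions, there exist $q\ge p_1$ and $r\ge p_2$ such that $m$ lies in $\ker M_\alpha$ for $\alpha\colon p\to(q,p_2)$ and in $\ker M_\beta$ for $\beta\colon p\to (p_1,r)$. I then apply the short exact sequence of \cref{def:middleex} with $a=p$, $b=(p_1,r)$, $c=(q,p_2)$, $d=(q,r)$. The left-hand map $(M_{\iota_{ba}},M_{\iota_{ca}})=(M_\beta,M_\alpha)$ is injective by hypothesis, and sends $m$ to $0$, forcing $m=0$.

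For the second assertion, I would fix $m\in M_p$ and use pointwise finite-dimensionality to pick $s\le p_1$ and $t\le p_2$ so that $\Ima M^\leftarrow_p=\Ima M_{\iota_{p,(s,p_2)}}$ and $\Ima M^\downarrow_p=\Ima M_{\iota_{p,(p_1,t)}}$ (the descending intersections must stabilize since the ambient space $M_p$ is finite-dimensional). Now apply the short exact sequence with $a=(s,t)$, $b=(s,p_2)$, $c=(p_1,t)$, $d=p$. The right-hand map $(M_{\iota_{db}},-M_{\iota_{dc}})$ is surjective, so there exist $u\in M_b$ and $v\in M_c$ with $m=M_{\iota_{db}}(u)-M_{\iota_{dc}}(v)$. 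By the choice of $s$ the term $M_{\iota_{db}}(u)$ lies in $\Ima M^\leftarrow_p$, and by the choice of $t$ the term $-M_{\iota_{dc}}(v)$ lies in $\Ima M^\downarrow_p$, which gives the required decomposition.

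There is no real obstacle here beyond bookkeeping: the only subtlety is confirming that the intersection defining $\Ima M^\leftarrow_p$ really is attained by some single horizontal map (and similarly for $\Ima M^\downarrow_p$), but this is immediate from pointwise finite-dimensionality since a decreasing family of subspaces of a finite-dimensional space must stabilize. Once the correct commutative square is set up in each part, both conclusions fall out of injectivity, respectively surjectivity, of the distinguished short exact sequence.
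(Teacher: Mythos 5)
Your proof is correct. The paper does not write out a proof of this lemma (it simply says ``following the same line of arguments'' after establishing that $\Ima M^\leftarrow$, etc.\ are submodules, where the key technique of using pointwise finite-dimensionality to replace intersections and unions over all $q$ by a single attained map was already demonstrated), and what you have done is exactly the intended argument: stabilize each intersection/union at a single horizontal or vertical map, then read off the first claim from injectivity and the second from surjectivity in the short exact sequence for the appropriate rectangle.
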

\begin{lem}
Let $M$ be as in \cref{lem:shortexact}. If $\Ima M^\leftarrow\cap \ker M^\rightarrow \neq 0$ or $\Ima M^\downarrow\cap \ker M^\uparrow\neq 0$, then $M\cong k_I$ where $I$ is of type \sdb.
\label{lem:kerker}
\end{lem}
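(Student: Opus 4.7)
The two hypotheses are symmetric under interchanging the factors of $P$, so the plan is to handle only $\Ima M^\leftarrow \cap \ker M^\rightarrow \neq 0$; the other case yields a block of the complementary shape $S \times J_T$ by the same argument. Assuming $M$ is indecomposable (as we may, reducing via \cref{t:decomp}), the target is to produce a monomorphism $k_{J_S \times T} \hookrightarrow M$ for some bounded interval ideal $J_S \subseteq S$: since $J_S \times T$ is a directed ideal of $P$, the module $k_{J_S \times T}$ will be injective by \cref{l:injective}, so the monomorphism splits and indecomposability then forces $M \cong k_{J_S \times T}$, a block of type \sdb.

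To locate $J_S$, we pick a point $(p_1,p_2)$ at which $N := \Ima M^\leftarrow \cap \ker M^\rightarrow$ is nonzero and decompose the horizontal slice by \cref{t:totorder}, $M^{\bf h_{p_2}} \cong \bigoplus_i k_{J_i}$. A summand-by-summand computation shows that $N_{(p_1,p_2)}$ collects exactly the contributions from those $J_i$ which contain $p_1$, extend to $-\infty$, and are bounded above --- i.e., from the bounded interval ideals of $S$ containing $p_1$. The hypothesis supplies at least one such $J_i$, which we take as $J_S$. The inclusion of the corresponding summand is a monomorphism $k_{J_S \times \{p_2\}} \hookrightarrow M^{\bf h_{p_2}}$, and since every element of the nonempty set $S \setminus J_S$ is an upper bound of $J_S$, \cref{lem:lift}(2) lifts this to $h_1\colon k_{J_S \times (-\infty,p_2]} \hookrightarrow M|_{S \times (-\infty,p_2]}$.

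The remaining task is to extend $h_1$ upward to $k_{J_S \times T}$, and this is where short exactness (not merely middle exactness) is essential. For $x \in J_S$ and $t > p_2$ we set $\eta_{(x,t)} := M_{(x,p_2)\to(x,t)}(h_1(x,p_2)(1))$; functoriality of $M$, together with the compatibilities already built into $h_1$, shows that these elements assemble with $h_1$ into a morphism $h_2\colon k_{J_S \times T} \to M$. The only nontrivial injectivity check is $\eta_{(x,t)} \neq 0$ for $t > p_2$. Choose any $q_1 \in S \setminus J_S$ (which sits strictly above $J_S$); short exactness on the square with corners $(x,p_2)$, $(x,t)$, $(q_1,p_2)$, $(q_1,t)$ makes $M_{(x,p_2)} \to M_{(x,t)} \oplus M_{(q_1,p_2)}$ injective, and the horizontal component vanishes on $h_1(x,p_2)(1)$ because $q_1 \notin J_S$, forcing $\eta_{(x,t)} \neq 0$. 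This is the main obstacle: middle exactness alone would allow an element that dies to the right to also die upward and collapse the extension, but short exactness rules this out and delivers exactly the nonvanishing required.
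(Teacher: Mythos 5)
Your proof is correct and follows essentially the same strategy as the paper: decompose a horizontal slice via \cref{t:totorder} to extract a bounded interval ideal $J_S$, then build a monomorphism $k_{J_S\times T}\hookrightarrow M$ and conclude via injectivity of the block module and indecomposability of $M$. The one cosmetic difference is the order of the two extension steps: the paper first pushes the slice monomorphism upward (using $\ker M^\rightarrow\cap\ker M^\uparrow=0$ from \cref{lem:shortexact}) and then lifts downward via \cref{lem:lift}(2), whereas you lift downward first and push upward second; both orderings work because the two directions are handled by independent arguments. The paper also avoids your ``summand-by-summand'' identification of $J_S$ by decomposing the slice of the submodule $W=\Ima M^\leftarrow\cap\ker M^\rightarrow$ directly rather than the slice of $M$; your version requires the extra (but straightforward) bookkeeping that $N_{(p_1,p_2)}$ is exactly the span of the summands that are bounded interval ideals containing $p_1$.
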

\begin{proof}
We prove it for the first case; the second case is symmetrical. Let $W=\Ima M^\leftarrow\cap \ker M^\rightarrow$ and assume that $W_{(x, y)} \neq 0$. By \cref{t:totorder} and the assumptions on $W$, the restriction $W^{\bf h_{y}}$ decomposes as a direct sum $\oplus_J k_J$ where at least one interval ideal $J$ has an upper bound in ${\bf h_{y}}-J$.  Fix such $J$ and consider the associated monomorphism $h\colon k_{J\times \{y\}} \hookrightarrow W^{\bf h_{y}} \subseteq M^{\bf h_{y}}$. By \cref{lem:shortexact}, $\ker M^\rightarrow_{(s,y)}\cap \ker M^\uparrow_{(s,y)} = 0$, and therefore we must have $M_\alpha(h_{(s,y)}(1)) \neq 0$ for all $\alpha\colon (s,y)\to (s, p_2)$. Hence, $h$ lifts to a monomorphism $k_{J\times [y, \infty)}\hookrightarrow M$. This monomorphism can in turn be lifted to $h: k_{J\times T} \to M$ by means of (2) of \cref{lem:lift}. Since $J\times T$ is of type {\sdb} the result follows. 
\end{proof}

We are now ready to prove the main statement of this section.
\begin{proof}[Proof of \cref{thm:block}]
By \cref{t:totorder} it suffices to show the result for $M$ indecomposable. If the conditions of \cref{lem:splitinj} or \cref{lem:splitproj} are satisfied, then we are done. Thus, we may assume that \cref{eq:middlex} is short exact. Consider the submodules $\Ima M^\leftarrow$ and $\Ima M^\downarrow$, and an arbitrary $(x,y)\in P$. By \cref{lem:kerker} we may assume that $\ker (\Ima M^\leftarrow_\alpha) = 0$ and $\ker (\Ima M^\downarrow_\beta) =0$ for all $\alpha\colon(x,y) \to (x',y)$ and $\beta\colon (x,y)\to (x, y')$. Since these morphisms are surjective by definition, it follows that they are in fact isomorphisms. Hence, if $(\Ima M^\leftarrow)^{\bf v_x} \cong \bigoplus_J k_J$, then $\Ima M^\leftarrow \cong \bigoplus_J k_{S\times J}$, and therefore block-decomposable. Symmetrically we also get that $\Ima M^\downarrow$ is block-decomposable. By \cref{lem:shortexact} we have that $M=\Ima M^\leftarrow + \Ima M^\downarrow$. Let $W=\Ima M^\leftarrow\cap\Ima M^\downarrow$ and observe that the internal morphisms of $W$ are all isomorphisms. Thus, if $W\neq 0$, then we have a monomorphism $k_{P}\hookrightarrow W \subseteq M$, and therefore $M\cong k_{P}$. If $X=0$, then $M=\Ima M^\leftarrow\oplus \Ima M^\downarrow$, and since $M$ is indecomposable, $M=\Ima M^\leftarrow$ or $M=\Ima M^\downarrow$. 
\end{proof}

\subsection{Decomposition of Infinite Zigzags}
\label{sec:zigzag}
%A \emph{zigzag} persistence module is indexed by a diagram of the following shape
%\[\cdots \leftrightarrow \bullet \leftrightarrow \bullet\leftrightarrow \bullet \leftrightarrow  \cdots. \]
%Here $\leftrightarrow$ denotes either $\leftarrow$ or $\rightarrow$. Persistence modules 
Define a \emph{zigzag path} $\gamma$ to be a function $\gamma: \Z \to \Z^2$ satisfying
\[\gamma(i+1) \in \left\{\gamma(i) + (1,0), \gamma(i) - (0,1)\right\}\]
and $\lim_{i\to \pm \infty} \gamma(i) = (\pm \infty, \mp \infty)$.  For such a path $\gamma$ let $Z(\gamma)\subseteq \R^2$ be the poset 
\[Z(\gamma) := \{ (s,t)\in \R^2 \mid \exists i\in \Z \text{ such that } \gamma(i) \leq (s,t) \leq \gamma(i+1)\}. \]
Observe that $Z(\gamma)$ separates $\R^2-Z(\gamma)$ into two disjoint subsets 
\begin{align*}
R_U&=\{(s,t) \mid \exists p\in Z(\gamma) \text{ such that } (s,t)\geq p\}-Z(\gamma)\\
R_L&= \{(s,t) \mid \exists p\in Z(\gamma) \text{ such that } (s,t) \leq p\}-Z(\gamma).
\end{align*}
We say that a non-empty subset $I\subseteq Z(\gamma)$ is an \emph{interval} if it is convex and connected. Observe that a non-trivial intersection of a block and $Z(\gamma)$ is an interval.

\begin{cor}
Let $\gamma$ be a zigzag path. If $M\colon Z(\gamma)\to \cvec$ is pointwise finite-dimensional, then $M$ decomposes into interval modules. 
\label{thm:zigzag}
\end{cor}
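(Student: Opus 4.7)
The plan is to reduce \cref{thm:zigzag} to \cref{thm:block} by extending $M$ from $Z(\gamma)$ to a pointwise finite-dimensional, middle exact persistence module $\tilde{M}\colon \R^2 \to \cvec$. Since $Z(\gamma) \subseteq \R^2$ separates the complement into the two regions $R_U$ and $R_L$, I would define $\tilde{M}_p = M_p$ for $p \in Z(\gamma)$, $\tilde{M}_p = 0$ for $p \in R_U$, and, for $p \in R_L$, take $\tilde{M}_p = \varprojlim_{q \in K_p} M_q$, where $K_p = Z(\gamma) \cap \{r : r \geq p\}$. Because the staircase is monotone, $K_p$ is a bounded, connected piece of the staircase passing through only finitely many corners of $\gamma$, so the limit is a subspace of a finite product of finite-dimensional spaces and $\tilde{M}$ is pointwise finite-dimensional.

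The crux is to verify that $\tilde{M}$ is middle exact. For a square with corners $a = (x,y)$, $b = (x, y')$, $c = (x', y)$, $d = (x', y')$, one checks the combinatorial identities $K_b \cup K_c \subseteq K_a$ and $K_d = K_b \cap K_c$. In the case that the staircase does not pass through the intermediate rectangle $[x, x') \times [y, y')$---equivalently, $d \in Z(\gamma) \cup R_L$---one has $K_a = K_b \cup K_c$, and the standard identity $\varprojlim_{K_b \cup K_c} M = \varprojlim_{K_b} M \times_{\varprojlim_{K_b \cap K_c} M} \varprojlim_{K_c} M$ gives $\tilde{M}_a = \tilde{M}_b \times_{\tilde{M}_d} \tilde{M}_c$, yielding the middle exact sequence. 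The complementary case, where $d \in R_U$ and the staircase traverses the intermediate rectangle, is more delicate: here $K_a \supsetneq K_b \cup K_c$ and the naive right Kan extension may fail to be middle exact, so the definition of $\tilde{M}$ on $R_L$ should be refined---for instance, by replacing the naive limit with the stabilised subspace $\bigcap_{q \leq p,\, q \in R_L} \Ima(\tilde{M}_q \to \varprojlim_{K_p} M)$---to guarantee surjectivity of the transition maps $\tilde{M}_a \to \tilde{M}_b \oplus \tilde{M}_c$ when $\tilde{M}_d = 0$.

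Once middle exactness is established, \cref{thm:block} decomposes $\tilde{M} = \bigoplus_i k_{B_i}$ as a direct sum of block modules for blocks $B_i \subseteq \R^2$. Restricting to $Z(\gamma)$ yields $M = \tilde{M}|_{Z(\gamma)} = \bigoplus_i k_{B_i \cap Z(\gamma)}$. Since each $B_i$ is convex in $\R^2$ and $Z(\gamma)$ is path-connected via the staircase, each non-empty intersection $B_i \cap Z(\gamma)$ is a non-empty, convex, connected subset of $Z(\gamma)$, hence an interval in the sense of the paper, and the corresponding summand $k_{B_i \cap Z(\gamma)}$ is an interval module. The main obstacle is the middle exactness verification in the exceptional case $d \in R_U$ where the staircase crosses the intermediate rectangle; handling this requires a careful refinement of the right Kan extension on $R_L$ and a check that the refinement remains compatible with the prescribed values $\tilde{M}_p = M_p$ on $Z(\gamma)$.
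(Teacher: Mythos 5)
There is a genuine gap in the construction: setting $\tilde M_p=0$ for $p\in R_U$ does not produce a middle exact module, and the proposed refinement on $R_L$ cannot repair it. Consider a rectangle $a=(x,y)$, $b=(x,y')$, $c=(x',y)$, $d=(x',y')$ in which $a$, $b$, $c$ all lie on $Z(\gamma)$ (for instance $a$ an inner corner of the staircase with $b$ and $c$ its two neighbours) and $d\in R_U$. On these three points the values of $\tilde M$ are prescribed to be $M_a$, $M_b$, $M_c$, so no modification of $\tilde M$ on $R_L$ can affect them. With $\tilde M_d=0$, middle exactness of
\[
0\to \tilde M_a\to \tilde M_b\oplus \tilde M_c\to \tilde M_d\to 0
\]
forces $M_a\to M_b\oplus M_c$ to be \emph{surjective}, which is false for an arbitrary zigzag module (take $M_a=0$, $M_b=M_c=k$). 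The paper avoids this by defining the extension on $R_U$ as a cokernel (equivalently a colimit of $M$ over $Z(\gamma)\cap D(s,t)$) rather than as $0$; this value is exactly what is needed to make each unit square exact, and then \cref{lem:subdivide} propagates exactness to all rectangles. So the correct Kan-extension picture is: right Kan extension below the staircase \emph{and} left Kan extension above it, not a zero extension above.

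Once that is fixed, the rest of the strategy matches the paper's: apply \cref{thm:block} to the middle exact extension and restrict back to $Z(\gamma)$, noting that blocks intersect $Z(\gamma)$ in intervals. Your final paragraph about $B_i\cap Z(\gamma)$ being convex and connected, hence an interval, and hence $k_{B_i\cap Z(\gamma)}$ an interval module, is fine. But the middle exactness claim as you have set it up cannot be made to work without replacing the zero values on $R_U$ by cokernels, and the ``refine the definition on $R_L$'' escape route does not address the obstruction because the obstruction lives entirely on $Z(\gamma)$ and $R_U$.
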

To prove this we need the following lemma
\begin{lem}
Let $M\colon \R^2\to \cvec$ be such that $M|_{[i, i+1]\times [j, j+1]}$ is middle exact for all $(i,j)\in \Z^2$. Then $M$ is middle exact.
\label{lem:subdivide}
\end{lem}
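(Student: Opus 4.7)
The plan is to reduce the statement to a pasting lemma for middle-exact rectangles, and then to subdivide an arbitrary rectangle by integer lines so that each atomic piece sits inside a unit square covered by the hypothesis. Unpacking \cref{def:middleex}, saying that $M|_{[i,i+1]\times[j,j+1]}$ is middle exact means that \cref{eq:middlex} is exact at the middle term for \emph{every} choice of $(x,y),(x,y'),(x',y),(x',y')$ with all coordinates in the unit square. In particular, middle exactness already holds for any rectangle whose horizontal and vertical extents each fit inside a single integer unit interval.

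First I would establish a horizontal pasting lemma: if $x_0 < x_1 < x_2$ and $y < y'$, and both $[x_0,x_1]\times[y,y']$ and $[x_1,x_2]\times[y,y']$ are middle exact for $M$, then so is $[x_0,x_2]\times[y,y']$. The chase is short. Given $m_b \in M_{(x_0,y')}$ and $m_c \in M_{(x_2,y)}$ whose images in $M_{(x_2,y')}$ coincide, transport $m_b$ to $m'_b := M_{\iota_{(x_1,y')(x_0,y')}}(m_b) \in M_{(x_1,y')}$; factoring through $(x_1,y')$ shows that $(m'_b, m_c)$ still has matching images in $M_{(x_2,y')}$, so middle exactness of the right rectangle produces $m' \in M_{(x_1,y)}$ whose images recover $m'_b$ and $m_c$. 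Applying middle exactness of the left rectangle to the pair $(m_b, m')$, whose images in $M_{(x_1,y')}$ agree by construction, yields the required preimage $m_a \in M_{(x_0,y)}$, and functoriality of $M$ along the inclusions checks that $m_a$ has the correct images in $M_{(x_0,y')}$ and $M_{(x_2,y)}$. The vertical pasting lemma is entirely symmetric.

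To conclude, given an arbitrary rectangle with corners $(x,y),(x,y'),(x',y),(x',y')$, I would subdivide along integers: let $x_0 = x < x_1 < \cdots < x_p = x'$ consist of $x, x'$ together with all integers in the open interval $(x,x')$, and likewise $y_0 = y < y_1 < \cdots < y_q = y'$. By construction each atomic piece $[x_i,x_{i+1}]\times[y_j,y_{j+1}]$ is contained in some integer unit square $[m,m+1]\times[n,n+1]$, so it is middle exact by hypothesis. Iterating the horizontal pasting lemma across the $i$-direction yields middle exactness of each strip $[x,x']\times[y_j,y_{j+1}]$, after which iterating the vertical pasting lemma in the $j$-direction assembles the strips into the desired middle-exact rectangle. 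The only real content is the pasting-lemma diagram chase, which is routine; the rest is bookkeeping, with a small subtlety only when some of $x,x',y,y'$ happen themselves to be integers, handled uniformly by including the endpoints in the lists of breakpoints.
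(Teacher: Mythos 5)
Your proof is correct and follows essentially the same strategy as the paper: subdivide an arbitrary rectangle along integer lines so every atomic piece sits in a unit square, then verify by a short diagram chase that middle exactness pastes along a shared edge, iterating horizontally then vertically. The paper packages this as a single $2\times2$ subdivision at one interior point applied recursively, while you isolate a one-dimensional pasting lemma and apply it strip by strip, but the underlying argument is the same.
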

\begin{proof}
Let $a,b,c,d$ as in \cref{def:middleex} and choose any point $a\leq (s, t)\leq d$. Consider the following commutative diagram
\[
\begin{tikzcd}
M_b\ar[r] & M_{(s, y')} \ar[r] & M_d\\
M_{(x, t)} \ar[r]\ar[u] & M_{(s,t)} \ar[r]\ar[u] & M_{(x', t)}\ar[u]\\
M_a \ar[u]\ar[r] & M_{(s,y)} \ar[u]\ar[r] & M_{c}\ar[u]
\end{tikzcd}
\]
A simple diagram chase shows that if $M$ satisfies the middle exact condition on the four minimal rectangles, then so does it on the larger bounding rectangle. Thus, we may iteratively subdivide the bounding rectangle such that the corner points of any (non-trivial) minimal rectangle  all lie in a square $[i, i+1]\times [j, j+1]$ for some $(i,j)$. 
\end{proof}
Let $\lceil t\rceil$ denote the least  integer \emph{strictly} greater than $t$, and let $\lfloor t\rfloor$ denote the greatest integer \emph{strictly} less than $t$. 

We can extend $M$ to a representation $E(M)\colon \R^2\to \cvec$ recursively as follows
\begin{equation}
E_\gamma(M)_{(s,t)} =
\begin{cases} 
M_{(s,t)} &\text{ if } (s,t)\in Z(\gamma)\\
\Ker \left( M_{(s,\lceil t\rceil)}\oplus M_{(\lceil s \rceil ,t)}\to M_{(\lceil s\rceil ,\lceil t \rceil )}\right) &\text{ if } (s,t)\in R_L\\
\Coker \left(M_{(\lfloor s \rfloor, \lfloor t\rfloor)} \to M_{(s,\lfloor t\rfloor )}\oplus M_{(\lfloor s \rfloor ,t)}\right) &\text{ if } (s,t)\in R_U
\end{cases}
\label{eq:E}
\end{equation}
where the internal morphisms are given by functoriality of $\Ker$ and $\Coker$. This definition is well-defined as every recursive call will terminate in finite time. An equivalent definition of $E_\gamma(M)$ using limits and colimits can be gives as follows: for $(s,t)\in \R^2$ let $D(s,t) = \{p\in \R^2\mid p \leq (s,t)\}$ and $U(s,t) = \{p\in \R^2 \mid p\geq (s,t)\}$. Then $E_\gamma(M)$ is the following persistence module
\[
E_\gamma(M)_{(s,t)} =
\begin{cases} 
M_{(s,t)} &\text{ if } (s,t)\in Z(\gamma)\\
\varprojlim M|_{Z(\gamma)\cap U(s,t)} &\text{ if } (s,t)\in R_L\\
\varinjlim M|_{Z(\gamma)\cap D(s,t)} &\text{ if } (s,t)\in R_U
\end{cases}.
\]
By \cref{eq:E} we see that $E_\gamma(M)$ is middle exact on every square $[i, i+1]\times [j,j+1]$ and thus middle exact by \cref{lem:subdivide}. As $E_\gamma(M)$ is clearly pointwise finite-dimensional it follows from \cref{thm:block} that $E_\gamma(M)$ is block-decomposable. Therefore \[M=E_\gamma(M)|_{Z(\gamma)} \cong (\oplus_J k_J)|_{Z(\gamma)}\cong \oplus_J k_{J\cap Z(\gamma)}\] where the $J$'s are blocks. This concludes the proof of \cref{thm:zigzag}. 

\subsection{Upper-triangular support}
\label{sec:upper}
In this section $T\subseteq \R^2$ denotes the \emph{strictly upper-triangular} subset $\{(x, y)\in \R^2 \mid x+y > 0\}$, and $\overline T\subseteq \R^2$ denotes the \emph{upper-triangular} subset $\{(x, y)\in \R^2 \mid x+y \geq 0\}$. We define a block in $T$ to be a subset of the form $J\cap T$, where $J\subseteq \R^2$ is a block. Furthermore, $M\colon T\to \cvec$ is \emph{middle exact} if \cref{eq:middlex} is middle exact for all such $a,b,c,d\in T$. Blocks and middle exact modules are defined accordingly in the upper-triangular setting. 

First we prove \cref{thm:upperT} in the strictly upper-triangular setting. Observe that if $I\subseteq \R^2$ is of type {\sdb}, then $I\cap T$ is both an ideal and directed. Hence, $k_{I\cap T}\colon T\to \cvec$ is injective by \cref{l:injective}.
\begin{lem}
Let $M\colon T\to \cvec$ be pointwise finite-dimensional, middle exact and indecomposable. If there exist $a,b,c,d\in T$ as in \cref{def:middleex} such that \[\ker (M_{\iota_{ba}}) \cap \ker (M_{\iota_{ca}}) \neq 0,\] then $M\cong k_{I\cap T}$ where $I$ of type \sdb. 
\label{lem:splitinjT}
\end{lem}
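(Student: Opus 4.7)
The plan is to follow the three-step strategy of \cref{lem:splitinj}, adapted so that every set is intersected with $T$. Since $x+y>0$, the subposet $(x,y)^{\leftrightarrows}$ lies entirely in $T$, so \cref{thm:zz} applies to $M|_{(x,y)^{\leftrightarrows}}$ and yields a direct summand isomorphic to $k_J$ with $J=(\{x\}\times J_T)\cup(J_S\times\{y\})$, where $x\in J_S\subseteq[x,x')$ and $y\in J_T\subseteq[y,y')$ are both minimal. Set $I=((-\infty,x]\cup J_S)\times((-\infty,y]\cup J_T)$, an interval of type \sdb\ in $\R^2$. Then $I\cap T$ is simultaneously an ideal and directed inside $T$, so by \cref{l:injective} the module $k_{I\cap T}$ is injective on $T$. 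It therefore suffices to construct a monomorphism $k_{I\cap T}\hookrightarrow M$; indecomposability of $M$ will then force $M\cong k_{I\cap T}$.

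I would build the monomorphism in three stages, mirroring \cref{lem:splitinj}. First, middle exactness of the rectangles based at $(x,y)$ combined with the summand $k_J$ yields a monomorphism $k_{J_S\times J_T}\hookrightarrow M$; this step transfers verbatim because $J_S\times J_T\subseteq T$. Second, one extends horizontally to $k_{((-\infty,x]\cup J_S)\times J_T\,\cap\,T}\hookrightarrow M$. Third, one extends vertically to $k_{I\cap T}\hookrightarrow M$.

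The principal obstacle is that \cref{lem:lift} cannot be invoked verbatim: the region $(-\infty,x]\times J_T\cap T$ is directed but \emph{not} codirected, since two points on opposite sides of the anti-diagonal can fail to have a common lower bound in $T$. Consequently \cref{l:directecodirected} does not apply directly to the auxiliary module $E$ from the proof of \cref{lem:lift}. To bypass this I would exhaust the staircase by an increasing family of genuine sub-rectangles $R_n=[a_n,x]\times[y,b_n]$ with $a_n+y>0$ and $b_n\uparrow\sup J_T$, so that each $R_n$ lies in $T$. Each $R_n$ is a product of two intervals of $\R$, hence both directed and codirected, and $E|_{R_n}$ has all internal maps surjective and is pointwise nonzero. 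Therefore \cref{l:directecodirected} produces a summand $k_{R_n}\hookrightarrow E|_{R_n}$ whose restriction to $\{x\}\times J_T\cap R_n$ can, after scaling, be arranged to agree with $h$. The non-empty affine varieties $L_n$ of such local lifts form an inverse system under restriction $L_{n+1}\to L_n$; a compactness argument using pointwise finite-dimensionality (the fibers are cut out in finite-dimensional vector spaces and always non-empty) then produces a coherent global section, which is the desired upper-triangular analog of \cref{lem:lift}. The vertical step is symmetric, and the three steps combined produce $k_{I\cap T}\hookrightarrow M$, completing the proof.
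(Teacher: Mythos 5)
Your proposal correctly identifies the real obstacle --- that the staircase region $(-\infty,x]\times J_T\cap T$ is not codirected, so \cref{lem:lift} and \cref{l:directecodirected} cannot be used verbatim --- but the proposed repair does not close the gap. The nested rectangles $R_n=[a_n,x]\times[y,b_n]$ with $a_n+y>0$ all live in the half-plane $\{u>-y\}$, so their union is contained in $(-y,x]\times[y,\sup J_T)$ and misses the entire part of the staircase with $u\le -y$ (which is nonempty as soon as $\sup J_T>y$). If you instead let $a_n$ become more negative, you must either raise the bottom of the $y$-interval (so the rectangles are no longer nested and the inverse system of lift-sets $L_n$ no longer forms under restriction) or truncate by $T$ (so $R_n$ is again not codirected). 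Either way the compactness argument cannot run as stated, and the same problem recurs at the vertical extension step.

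The paper avoids trying to salvage \cref{lem:lift} altogether. It first restricts $M$ to the quadrant $((x-y)/2,\infty)\times((y-x)/2,\infty)\subseteq T$, which \emph{is} a product of two rays, and applies \cref{lem:splitinj} there to obtain a starting rectangle $R_0$ and monomorphism $f_0\colon k_{R_0}\hookrightarrow M$. It then writes the full staircase $J=I\cap T$ as a \emph{disjoint} union of rectangles $R_0,R_1,R_2,\dots$ arranged in a zigzag so that $J\setminus J_n$ is an ideal in $T$ for every $n$, where $J_n=\bigcup_{i\le n}R_i$, and inductively extends $f_{n-1}\colon k_{J_{n-1}}\hookrightarrow M$ across the new rectangle $R_n$. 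The inductive step is an explicit middle-exactness argument: for each $p\in R_n$ one produces a nonempty coset $E_p\subseteq M_p$ of compatible lifts, checks $E_{p'}\to E_p$ is surjective for $p'\le p$ in $R_n$, and takes an inverse limit along a cofinal decreasing sequence in $R_n$ to obtain a coherent choice. This replaces the appeal to \cref{l:directecodirected} by a direct construction adapted to the staircase geometry, which is precisely the point where your plan breaks down.
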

\begin{proof}
The restriction of $M$ to $((x-y)/2, \infty)\times ((y-x)/2, \infty)$ is
again middle exact and by \cref{lem:splitinj} it must have a summand
isomorphic to $k_{R_0}$ where $R_0=((x-y)/2, s''\rangle \times ((y-x)/2,
t''\rangle$ for $s'',t''\in \R$.
This defines a monomorphism $f_0\colon k_{R_0}\hookrightarrow M$ of
persistence modules for $T$.
Let $I = (-\infty, s''\rangle\times (-\infty, t''\rangle$, and write $J =
I\cap T$ as a disjoint union $\bigcup_{n=0}^\infty R_n$, where

(i) each $R_n$ is of the form $(x_n,x_n'\rangle\times (y_n,y_n'\rangle$, and

(ii) $J\setminus J_n$ is an ideal in $T$ for all $n$, where $J_n =
\bigcup_{i=0}^n R_n$.
\[
\setlength{\unitlength}{0.5cm}
\begin{picture}(12,12)(-6,-6)
\put(-6,6){\line(1,-1){12}}
\put(5,5){\line(-1,0){11}}
\put(5,5){\line(0,-1){11}}
\put(1,-1){\line(1,0){4}}
\put(1,-1){\line(0,1){6}}
\put(3,-3){\line(1,0){2}}
\put(3,-3){\line(0,1){2}}
\put(-2,2){\line(1,0){3}}
\put(-2,2){\line(0,1){3}}
\put(-3.5,3.5){\line(1,0){1.5}}
\put(-3.5,3.5){\line(0,1){1.5}}
\put(-0.5,0.5){\line(1,0){1.5}}
\put(-0.5,0.5){\line(0,1){1.5}}
\put(2,-2){\line(1,0){1}}
\put(2,-2){\line(0,1){1}}
\put(4,-4){\line(1,0){1}}
\put(4,-4){\line(0,1){1}}
\put(3,2){\makebox(0,0){$R_0$}}
\put(-0.5,3.5){\makebox(0,0){$R_1$}}
\put(4,-2){\makebox(0,0){$R_2$}}
\put(-2.75,4.25){\makebox(0,0){$R_3$}}
\put(0.25,1.25){\makebox(0,0){$R_4$}}
\put(2.5,-1.55){\makebox(0,0){$R_5$}}
\put(4.5,-3.55){\makebox(0,0){$R_6$}}
\put(-5,5){\circle*{0.2}}
\put(-5,4.8){\makebox(0,0)[tr]{$(-t'',t'')$}}
\put(5,-5){\circle*{0.2}}
\put(4.8,-5){\makebox(0,0)[tr]{$(s'',-s'')$}}
\put(5,5){\circle*{0.2}}
\put(4.8,4.8){\makebox(0,0)[tr]{$(s'',t'')$}}
\put(1,-1){\circle*{0.2}}
\put(1,-1){\makebox(0,0)[tr]{$\displaystyle (\frac{x-y}{2},\frac{y-x}{2})$}}
\end{picture}
\]
By induction we extend $f_0$ to a monomorphism
$f_n\colon k_{J_n}\hookrightarrow M$ for all~$n$.
Namely, suppose we are given $f_{n-1}$, we construct $f_n$.
There are two situations we need to consider:

(a) where points above and to the right of $R_n$ are in $J_{n-1}$ (for
example $R_4$, $R_5$), and

(b) where points to the right of $R_n$ are in $J_{n-1}$ and points above
$R_n$ are not in $J$ (for example $R_1$, $R_3$), or the dual
situation (for example $R_2$, $R_6$).

For $p\in R_n$ we construct a set $\emptyset \neq E_p \subseteq M_p$ as
follows.
For situation (a), let $q\in J_{n-1}$ be a point above $p$ and let $s\in
J_{n-1}$ be a point to the right of $p$.
We complete them to a rectangle $pqrs$. Then $r\in J_{n-1}$, and
$(f_{n-1})_q(1) \in M_q$ and $(f_{n-1})_s(1)\in M_s$ have the same image
$(f_{n-1})_r(1)\in M_r$.
By middle exactness, the set
\[
E_p = \{ m\in M_p : \text{$M_{\iota_{qp}}(m) = (f_{n-1})_q(1)$ and
$M_{\iota_{rp}}(m) = (f_{n-1})_r(1)$} \}
\]
is not empty.
For situation (b), let $q\notin J$ be a point above $p$ and let $s\in
J_{n-1}$ be a point to the right of $p$.
We complete them to a rectangle $pqrs$. Then $r\notin J$ and
$0 \in M_q$ and $(f_{n-1})_s(1)\in M_s$ have the same image $0\in M_r$.
By middle exactness, the set
\[
E_p = \{ m\in M_p : \text{$M_{\iota_{qp}}(m) = 0$ and $M_{\iota_{rp}}(m)
= (f_{n-1})_r(1)$} \}
\]
is not empty.

For a different choice of $q',s'$ with $q'<q$ and $s'<s$ in both cases
(a) and (b) we obtain a set $E_p' \subseteq E_p$. But the set $E_p$ is a
coset of $\Ker M_{\iota_{qp}}\cap \Ker M_{\iota_{sp}}$. Henceforth, in
the definition of $E_p$, we choose $q$ and $s$ such that this subspace
is of minimal dimension.
Thus $E_p' = E_p$ for any choice of $q',s'$ as above.
It follows that for $m\in E_p$ and $t\in J_{n-1}$ with $p<t$, we have
$M_{\iota_{tp}}(m) = (f_{n-1})_t(1)$,
and for $t\notin J$ with $p<t$ we have $M_{\iota_{tp}}(m) = 0$.

Now if $p,p'\in R_n$ and $p'\le p$ then middle exactness ensures that
the map $E_{p'}\to E_p$ is surjective.
To see this we can reduce to the cases when $p'$ is to the left of, or
below $p$. We deal with the first of these.
We choose a rectangle $p'pqq'$ where $q'$ is above $p'$ and $q$ is above
$p$, both valid for the definition of $E_p$ and $E_{p'}$.
The vertical condition for $m\in E_p$ is that $M_{\iota_{qp}}(m)$ is
equal to $(f_{n-1})_q(1)$ in case (a) and 0 in case (b).
The vertical condition for $m'\in E_{p'}$ is that $M_{\iota_{q'p'}}(m')$
is equal to $(f_{n-1})_{q'}(1)$ in case (a) and 0 in case (b).
Middle exactness for the rectangle $p'pqq'$ thus implies that $E_{p'}\to
E_p$ is surjective.

Choose a sequence $p_1 \ge p_2 \ge \dots$ of elements of $R_n$, such
that for any $p\in R_n$, $p_i\le p$ for some $i$. By recursively lifting elements we get that
\[
V = \lim_{\substack{\longleftarrow \\ i}} E_{p_i}
\]
is non-empty. Choose $v\in V$ and define $f\colon k_{R_n}
\hookrightarrow M|_{R_n}$ by
\[
f_p(1)= \text{$M_{\iota_{p,p_i}}(v_i)$ where $p_i\leq p$.}
\]
This defines a lift of $f_{n-1}$ to a monomorphism $f_n\colon k_{J_n}
\hookrightarrow M$, as required.

Combining these maps gives a monomorphism $f\colon k_J\hookrightarrow M$.
Since $k_J$ is injective  and $M$ is indecomposable, we deduce that
$M\cong k_J$, as required.
\end{proof}

We also have the following result which is a direct consequence of \cref{lem:splitproj}.
\begin{lem}
\label{lem:splitprojT}
Let $M\colon T\to \cvec$ be pointwise finite-dimensional, middle exact and indecomposable. If there exist $a,b,c,d\in T$ as in \cref{def:middleex} such that \[\Coker ((M_{\iota_{db}}, -M_{\iota_{dc}}))\neq 0\] then $M\cong k_{I\cap T}$ where $I$ of type \sbb. 
\end{lem}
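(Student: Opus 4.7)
The plan is to deduce this from \cref{lem:splitinjT} by dualization, in exact parallel with how \cref{lem:splitproj} was obtained from \cref{lem:splitinj}.

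Concretely, I would consider $N := DM\colon T^{\rm op}\to\cvec$. By the standard properties of the duality $D$, the module $N$ is again pointwise finite-dimensional, middle exact, and indecomposable (using $M\cong D^2M$), and the cokernel hypothesis on $M$ at $(a,b,c,d)$ translates to a kernel hypothesis for $N$ at the reflected square in $T^{\rm op}$, with $d$ now being the minimum. The negation map $\nu\colon(x,y)\mapsto(-x,-y)$ is an order-preserving bijection from $T^{\rm op}$ onto the open lower-triangular region $T^-:=\{(x,y)\in\R^2 : x+y<0\}$, so we may equivalently regard $N$ as a pointwise finite-dimensional, middle exact, indecomposable module on $T^-$ with a kernel hypothesis at $\nu$ of the square.

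The argument of \cref{lem:splitinjT} now runs on $N$ over $T^-$ with all spatial conventions reflected by $\nu$. In particular: the opening reduction to an open sub-rectangle contained in $T$ becomes a reduction to a sub-rectangle of $T^-$ lying below and to the left of the reflected square; the appeal to \cref{lem:splitinj} is replaced by the dual \cref{lem:splitproj}, producing a \sbb-type summand in place of a \sdb-type one; the staircase decomposition $I\cap T=\bigcup R_n$ used to extend the initial monomorphism into all of $T$ is replaced by its $\nu$-mirror image in $T^-$; and the concluding appeal to the injectivity of $k_{I\cap T}$ on $T$ via \cref{l:injective} translates to the mirror statement on $T^-$ (directed ideals of $T$ correspond, under the order-reversing involution $\nu$ of $\R^2$, to codirected filters of $T^-$, and injectivity on $T$ corresponds to projectivity on $T^-$, which is the dual of \cref{l:injective}). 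This yields $N\cong k_{I'\cap T^-}$ on $T^-$ for some $I'\subseteq\R^2$ of the type mirror to \sdb\ under $\nu$. Applying $D$ once more and unwinding identifications, $M\cong D^2 M \cong k_{I\cap T}$ on $T$, where $I:=\nu^{-1}(I')$ is of type \sbb\ in $\R^2$, as required.

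The main obstacle is verifying step by step that the elaborate staircase lift in \cref{lem:splitinjT}—including the case analysis (a) versus (b) and the diagonal sequences $p_1\ge p_2\ge\cdots$ used to recursively lift elements—mirrors cleanly under $\nu$. This is essentially bookkeeping: each ingredient (middle exactness, restriction to sub-rectangles, inductive lifting, and injectivity/projectivity of block constant modules) is symmetric under the reflection, and no fundamentally new combinatorial content beyond the duality is required.
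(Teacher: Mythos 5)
Your overall plan (reduce by duality to a statement that can be attacked via the kernel-condition argument of \cref{lem:splitinjT}) is a genuinely different route from the one in the paper, and much heavier; but as written it does not hold together, and even after repair it would be considerably longer than needed.

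First, the key bookkeeping error. You correctly observe that the cokernel hypothesis on $M$ at $(a,b,c,d)$ dualizes to a \emph{kernel} hypothesis on $N=DM$ at the reflected square (with $d$ now minimal). But you then say the mirrored argument should ``appeal to the dual \cref{lem:splitproj}, producing a \sbb-type summand.'' This is a double dualization. Since $N'=N\circ\nu^{-1}\colon T^-\to\cvec$ carries a \emph{kernel} hypothesis, the step that replaces the opening of \cref{lem:splitinjT} should invoke \cref{lem:splitinj} (or rather \cref{thm:block} plus the kernel analysis), which produces a \emph{\sdb}-type summand $R_0'$ inside a down-left quarter plane $Q'\subset T^-$. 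The final duality then turns \sdb\ into \sbb. If you instead invoke \cref{lem:splitproj} its hypothesis is simply not satisfied; and if you nonetheless obtained a \sbb\ summand for $N'$, undoing $\nu$ and $D$ at the end would give a \sdb\ block for $M$, the wrong conclusion. The statement ``$I:=\nu^{-1}(I')$ is of type \sbb'' is therefore inconsistent with your own intermediate claim that $I'$ is ``the type mirror to \sdb'' under $\nu$: $\nu$ swaps \sdb\ and \sbb.

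Second, the mirroring of \cref{lem:splitinjT} is not symmetric in the way you expect, which matters for what work actually remains. In the original proof of \cref{lem:splitinjT}, the quarter plane $Q$ is a \emph{filter} in $T$, and a \sdb-block of $Q$ is a bounded box; the enclosing \sdb-block $I$ of $\R^2$ meets $T$ in a larger region $J=I\cap T$, which is why the staircase extension is needed. In the mirror over $T^-$, the quarter plane $Q'$ is an \emph{ideal}, and a \sdb-block of $Q'$ is already a full \sdb-block of $\R^2$ that lies entirely inside $T^-$. Consequently there is no staircase to run: the support does not need enlarging. What is \emph{not} automatic, and what you do not address, is that extending the monomorphism $k_{R_0'}\hookrightarrow N'|_{Q'}$ by zero to all of $T^-$ requires an argument, because $Q'$ is an ideal (so arrows can leave $Q'$): for $p\in R_0'$ and $q\in T^-\setminus Q'$ with $p\le q$, one must show that $N'_{\iota_{qp}}$ kills the image, which requires factoring through an intermediate point of $Q'\setminus R_0'$. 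Your appeal to ``projectivity on $T^-$'' is also off target: $R_0'$ is a directed ideal of $T^-$, so $k_{R_0'}$ is \emph{injective} over $T^-$ by \cref{l:injective} directly; no projectivity is needed.

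Finally, for comparison: the paper does none of this. It restricts $M$ to $U(a)$, a product of two totally ordered sets and a filter in $T$; applies \cref{lem:splitproj} (whose hypothesis is inherited) to find a \sbb-summand $k_I$ with $I$ contained in the interior of $U(a)$; and then extends both the inclusion $k_I\hookrightarrow M|_{U(a)}$ and the projection $M|_{U(a)}\twoheadrightarrow k_I$ by zero. The inclusion extends trivially because $U(a)$ is a filter, and the projection extends because any arrow from outside $U(a)$ into $I$ factors through a point of $U(a)\setminus I$. This is exactly the ``extension through an intermediate point'' step your mirrored argument would ultimately require, but done directly on $M$ with no duality or mirroring. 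You should first look for this direct restriction argument before reaching for the full machinery of \cref{lem:splitinjT}.
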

\begin{proof}The restriction $M'$ of $M$ to $U(a) = \{p\mid p\geq a\}$ is again middle exact, and by \cref{lem:splitproj} it has a summand isomorphic to $k_I$ where $I$ is a block of type $\sbb$ contained in the interior of $U(a)$. Since $I$ is contained in the interior of $U(a)$ it follows that the inclusion and projection $k_I\hookrightarrow M'\twoheadrightarrow k_I$ extend to give maps $k_I\hookrightarrow M\twoheadrightarrow k_I$. This shows that $M\cong k_I = k_{I\cap T}$. 
\end{proof}

\begin{proof}[Proof of \cref{thm:upperT} (Strictly upper-triangular) ]
By \cref{t:decomp} it suffices to consider the case that $M$ is indecomposable. Furthermore, \cref{lem:splitinjT,lem:splitprojT} allow us to restrict our attention to the case that \cref{eq:middlex} is short exact for all such $a,b,c,d\in T$. In particular, this means that we have the following natural isomorphisms
for all such $a,b,c$ and $d$:
\begin{align}
M_d \cong \Coker(M_a \to M_b\oplus M_c) & & M_a \cong \Ker(M_b\oplus M_c \to M_d).
\label{eq:T}
\end{align}
Consider any zigzag path $\gamma$ satisfying $\Ima \gamma \subset T$. By comparing \cref{eq:T} to \cref{eq:E} we see that $M \cong E_\gamma(M|_{Z(\gamma)})|_T$, and by \cref{thm:zigzag}, \[\ E_\gamma(M|_{Z(\gamma)})|_T\cong E_\gamma\left(\bigoplus_{I} k_I\right)\Big |_T \cong \bigoplus_I E_\gamma(k_I)|_T.\]
Since $M$ is assumed to be indecomposable it follows that $M\cong E_\gamma(k_I)|_T$ where $I= J\cap Z(\gamma)$ for a block $J\subseteq \R^2$. It is straightforward to verify that $E_\gamma(k_{J\cap Z(\gamma)})|_T = k_{J\cap T}$ if $J\cap Z(\gamma)\neq \emptyset$. 
\end{proof}
\begin{proof}[Proof of \cref{thm:upperT} (Upper-triangular)]
We shall show that any indecomposable, pointwise finite-dimensional and middle exact persistence module $N\colon \overline T\to \cvec$ is a block module.  This will be done by first restricting $N$ to $T$,  and then extending $N|_T$ to a module over $\overline T$. We show that when the restriction $N|_T$ is non-zero, the composition given by first restricting and then extending is an isomorphism. The result now follows from our previous work in the strictly upper-triangular setting.

Let $p=(x_0,y_0) \in \overline T \setminus T$, so $x_0+y_0=0$. Given a point $s = (x,y)\in T$ with $x_0<x$ and $y_0<y$, 
we consider the non-degenerate rectangle $pqrs$ where $q=(x,y_0)$ and $r=(x_0,y)$. Observe that $q,r\in T$.
For a persistence module $M\colon T\to \cvec$, we define 
\[
M_p^s = \Ker \left(M_q \oplus M_r \xrightarrow{ (-M_{\iota_{sq}} \ M_{\iota_{sr}}) } M_s \right).
\]
If $s'=(x',y')\in T$ with $x_0<x'\le x$ and $y_0<y'\le y$, and $pq'r's'$ is the corresponding rectangle,
then the commutative diagram
\[
\begin{CD}
M_{q'} \oplus M_{r'} @>(-M_{\iota_{s'q'}}\ M_{\iota_{s'r'}})>> M_{s'} \\
@V \theta VV @V M_{\iota_{ss'}} VV \\
M_q \oplus M_r @>(-M_{\iota_{sq}}\ M_{\iota_{sr}})>> M_s 
\end{CD},
\]
where $\theta = (\begin{smallmatrix} M_{\iota_{qq'}} & 0 \\ 0 & M_{\iota_{rr'}} \end{smallmatrix})$, induces a map $M^{ss'}_p\colon M^{s'}_p \to M^s_p.$

We define $\overline M_p = \varprojlim M^s_p$, where the inverse limit is over all $s$ giving non-degenerate rectangles, as above. Clearly there is a natural map $\overline M_p\to M_s$ for any $s=(x,y)$ with $x_0<x$ and $y_0<y$.
In addition there are natural maps $\overline M_p\to M_q$ for $q = (x,y_0)$ with $x_0<x$ 
and $\overline M_p\to M_r$ for $r = (x_0,y)$ with $y_0<y$.
Using this to extend $M$, this defines a functor from persistence modules over $T$ to persistence modules $\overline M$ over $\overline T$. Observe that if $M \cong k_{J\cap T}$ for a block $J\subseteq \R^2$, then $\overline M \cong k_{J\cap \overline T}$. Moreover, the functor respects direct sum decompositions. 

Now suppose that $N\colon \overline T\to \cvec$. For $p\in \overline T\setminus T$
there is a natural map $\mu_p^s:N_p \to (N|_T)_p^s$ induced by the maps $N_{\iota_{qp}}$ and $N_{\iota_{rp}}$.
This induces a natural map $N_p \to (\overline{N|_T})_p$ and thus a morphism $N\to \overline{N|_T}$.
%Using this one sees that the functor sending $M$ to $\overline M$ 
%is right adjoint to the restriction functor from $\overline T$-modules to $T$-modules.
If $N$ is a middle exact $\overline T$-module, then the map $\mu_p^s$ is surjective.

Now suppose that $N$ is indecomposable, pointwise finite-dimensional, middle exact, and
not isomorphic to the interval module $k_{\{p\}}$ for any point $p\in\overline T\setminus T$.
Since $k_{\{p\}}$ is injective, it follows that it does not occur as a submodule of $N$.
%We show that $N\cong \overline{N|_T}$. This completes the proof as we know from the ''strictly upper triangular'' part of \cref{thm:upperT} that $N|_T$ is block decomposable.

We need to show for any point $p\in \overline T\setminus T$ that $N_p \to (\overline{N|_T})_p$ is an isomorphism.
This map is induced by the maps $\mu_p^s: N_p \to ({N|_T})^s_p$ for non-degenerate rectangles $pqrs$ as above.
The kernels $\Ker(\mu^s_p)$ are subspaces of $N_p$, and if $s'\le s$, then $\Ker(\mu^{s'}_p) \subseteq \Ker(\mu^s_p)$. Since $N_p$ is finite-dimensional, there is some $s$
with $\Ker(\mu^s_p)$ of minimal dimension, and therefore $\Ker(\mu^s_p)$ must be contained in all other kernels.
Now any element $0 \neq \ell \in \Ker(\mu^s_p)$ defines a submodule of $N$ of the form $k_{\{p\}}$, a contradiction.
Thus $\Ker(\mu^s_p) = 0$, so also $\Ker(\mu^{s'}_p) = 0$ for any $s'\le s$. Thus $\mu^{s'}_p$ is an isomorphism for such $s'$,
and so $N_p \to (\overline{N|_T})_p$ is an isomorphism, as desired.
\end{proof}
\bibliographystyle{amsplain}
\bibliography{refs}

\end{document}